\newtheorem{theorem}{Theorem}[section]
\newtheorem{lemma}[theorem]{Lemma}
\theoremstyle{definition}
\newtheorem{definition}[theorem]{Definition}
\newtheorem{example}[theorem]{Example}
\theoremstyle{remark}
\newtheorem{remark}[theorem]{Remark}
\numberwithin{equation}{section}
\begin{document}

\title[Douglas factorization theorem revisited]{Douglas factorization theorem revisited}
\author[V. Manuilov, M.S. Moslehian, Q. Xu]{Vladimir Manuilov$^1$, M. S. Moslehian$^2$ \MakeLowercase{and} Qingxiang Xu$^3$}

\dedicatory{Dedicated to the memory of R. G. Douglas (1938-2018)}

\address{$^1$ Department of Mechanics and Mathematics, Moscow State University, Moscow, 119991, Russia.}
\email{manuilov@mech.math.msu.su}

\address{$^2$Department of Pure Mathematics, Ferdowsi University of Mashhad, P. O. Box 1159, Mashhad 91775, Iran.}
\email{moslehian@um.ac.ir}

\address{$^3$Department of Mathematics, Shanghai Normal University, Shanghai 200234, PR China.}
\email{qxxu@shnu.edu.cn; qingxiang\_xu@126.com}

%\author{Vladimir Manuilov$^1$}
%\address{$^1$Department of Mechanics and Mathematics, Moscow State University, Moscow, 119991, Russia}
%\email{manuilov@mech.math.msu.su}
\thanks{$^1$Partially supported by the RFBR grant No. 19-01-00574}

%    author two information
%\author{M. S. Moslehian$^2$}
%\address{$^2$Department of Pure Mathematics, Ferdowsi University of Mashhad, Center of Excellence in Analysis on ALgebraic %Structures (CEAAS), P. O. Box 1159, Mashhad 91775, Iran}
%\email{moslehian@um.ac.ir}
\thanks{$^2$Partially supported by a grant from Ferdowsi University of Mashhad (No. 2/50300)}

%    author three information
%\author{Qingxiang Xu$^3$}
%\address{$^3$Department of Mathematics, Shanghai Normal University, Shanghai 200234, PR China}
%\email{qxxu@shnu.edu.cn; qingxiang\_xu@126.com}
\thanks{$^3$Partially supported by a grant from Shanghai Municipal Science and Technology
Commission (18590745200)}

\subjclass[2010]{Primary 47A62; Secondary 46L08, 47A05.}
\keywords{Hilbert $C^*$-module, operator equation, regular operator, semi-regular operator.}

\begin{abstract}
Inspired by the Douglas factorization theorem, we investigate the solvability of the operator equation $AX=C$ in the framework of Hilbert $C^*$-modules. Utilizing partial isometries, we present its general solution when $A$ is a semi-regular operator. For such an operator $A$, we show that the equation $AX=C$ has a positive solution if and only if the range inclusion ${\mathcal R}(C) \subseteq {\mathcal R}(A)$ holds and $CC^*\le t\, CA^*$ for some $t>0$. In addition, we deal with the solvability of the operator equation $(P+Q)^{1/2}X=P$, where $P$ and $Q$ are projections. We provide a tricky counterexample to show that there exist a $C^*$-algebra $\mathfrak{A}$, a Hilbert $\mathfrak{A}$-module $\mathscr{H}$ and projections $P$ and $Q$ on $\mathscr{H}$ such that the operator equation $(P+Q)^{1/2}X=P$ has no solution. Moreover, we give a perturbation result related to the latter equation.
\end{abstract}

\maketitle

%---------------------------------------------------------------------------------------%
\section{Introduction}

The significant equation $AX=C$ and systems of equations including it have been intensely studied for matrices \cite{KM,RM}, bounded linear operators on Hilbert spaces \cite{ACG2,Dajic-Koliha, Liang-Deng}, and operators on Hilbert $C^*$-modules \cite{MEMM, WW}. For any operator $A$ between linear spaces, the range and the null space of $A$ are denoted by ${\mathcal R}(A)$ and ${\mathcal N}(A)$, respectively. In 1966, R. G. Douglas proved an equivalence of factorization, range inclusion, and majorization, known as the Douglas factorization theorem (Douglas lemma) in the literature. It reads as follows.

\begin{theorem}\label {DOUGLAS}\cite[Theorem 1]{DOU}
If $\mathcal{H}$ is a Hilbert space and $A, B \in \mathbb{B}(\mathcal{H})$, then the following statements are equivalent:
\begin{enumerate}
\item[{\rm (i)}] $\mathcal{R}(C) \subseteq \mathcal{R}(A)$;
\item[{\rm (ii)}] The equation $AX=C$ has a solution $X \in \mathbb{B}(\mathcal{H})$;
\item[{\rm (iii)}]$CC^* \leq k^2 AA^*$ for some $k\geq 0$.
\end{enumerate}
Moreover, if (i), (ii), and (iii) are valid, then there exists a unique operator $C$ (known as the Douglas Solution in the literature) so that
\begin{enumerate}
\item[{\rm (a)}] $\|X\|^2=\inf\{\mu|CC^*\leq \mu AA^*\}$;
\item[{\rm (b)}] $\mathcal{N}(C) = \mathcal{N}(X)$;
\item[{\rm (c)}] $\mathcal{R}(X) \subseteq \overline{\mathcal{R}(A^{*})}$.
\end{enumerate}
\end{theorem}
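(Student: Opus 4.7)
My plan is to establish the equivalence of (i), (ii), (iii) via a handful of implications and then to extract the distinguished solution. The implication $\mathrm{(ii)} \Rightarrow \mathrm{(i)}$ is immediate, since every element of $\mathcal{R}(C)$ has the form $A(Xh)$. Similarly $\mathrm{(ii)} \Rightarrow \mathrm{(iii)}$ is a one-line computation, $CC^{*} = AXX^{*}A^{*} \le \|X\|^{2} AA^{*}$, yielding the estimate with $k = \|X\|$.

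The heart of the matter is $\mathrm{(iii)} \Rightarrow \mathrm{(ii)}$, which simultaneously constructs the Douglas solution. Starting from $\|C^{*}h\|^{2} = \langle CC^{*}h, h\rangle \le k^{2}\|A^{*}h\|^{2}$, the prescription $T(A^{*}h) := C^{*}h$ is simultaneously well-defined on $\mathcal{R}(A^{*})$ (because $A^{*}h = 0$ forces $C^{*}h = 0$) and bounded by $k$. I would then extend $T$ by continuity to $\overline{\mathcal{R}(A^{*})}$ and by zero on $\mathcal{N}(A) = \overline{\mathcal{R}(A^{*})}^{\perp}$, and set $X := T^{*}$. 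The identity $AX = C$ follows from the pointwise identity $TA^{*}h = C^{*}h$ by continuous extension and adjointing, and the range condition $\mathcal{R}(X) \subseteq \overline{\mathcal{R}(A^{*})}$ is built into the construction of $T$. This well-definedness/extension step is the genuine obstacle of the whole proof; everything else is comparatively formal.

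To close the circle, I would derive $\mathrm{(i)} \Rightarrow \mathrm{(ii)}$ from the closed graph theorem: setting $Yh$ to be the unique element of $\overline{\mathcal{R}(A^{*})}$ with $A(Yh) = Ch$ (existence by (i), uniqueness by the orthogonal decomposition $\mathcal{H} = \mathcal{N}(A) \oplus \overline{\mathcal{R}(A^{*})}$), one verifies that the graph of $Y$ is closed using continuity of $A$ and $C$ together with closedness of $\overline{\mathcal{R}(A^{*})}$. The remaining assertions about the Douglas solution are then essentially bookkeeping. Uniqueness of the solution subject to (c) follows because any two candidates differ by an operator whose range lies in $\overline{\mathcal{R}(A^{*})} \cap \mathcal{N}(A) = \{0\}$. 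Property (b) drops out of (c): if $Ch = 0$ then $Xh \in \mathcal{N}(A) \cap \overline{\mathcal{R}(A^{*})} = \{0\}$, while the reverse inclusion is trivial. Finally, the infimum formula in (a) is obtained by comparing the bound $\|X\|^{2} \le k^{2}$ from the construction with the reverse direction encoded in $\mathrm{(ii)} \Rightarrow \mathrm{(iii)}$.
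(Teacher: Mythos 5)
The paper does not prove this theorem: it is quoted verbatim from Douglas's 1966 paper as background, with the typographical slips ($A,B\in\mathbb{B}(\mathcal{H})$ where $C$ is meant, ``unique operator $C$'' where $X$ is meant) already present in the source. Your argument is correct and is essentially Douglas's original proof --- the trivial implications $\mathrm{(ii)}\Rightarrow\mathrm{(i)}$ and $CC^*=AXX^*A^*\le\|X\|^2AA^*$, the construction of $T$ on $\mathcal{R}(A^*)$ by $TA^*h:=C^*h$ with extension by continuity and by zero on $\mathcal{N}(A)$, the closed-graph argument for $\mathrm{(i)}\Rightarrow\mathrm{(ii)}$, and uniqueness from $\mathcal{N}(A)\cap\overline{\mathcal{R}(A^*)}=\{0\}$ --- so there is nothing to correct; the only point worth making explicit in (a) is that, by the uniqueness clause, the solution produced by the $\mathrm{(iii)}\Rightarrow\mathrm{(ii)}$ construction for \emph{every} admissible $\mu$ is the same operator $X$, which is what lets you pass from $\|X\|^2\le\mu$ for each such $\mu$ to $\|X\|^2\le\inf\{\mu : CC^*\le\mu AA^*\}$.
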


There are several applications of the Douglas factorization theorem in investigation of operator equations. For instance, Nakamoto \cite{NAK} studied the solvability of $XAX = B$ by employing the Douglas factorization theorem \ref{DOUGLAS}. In 2008, Arias, Corach, and Gonzalez \cite{ACG2} introduced the notion of reduced solution which is a generalization of the concept of Douglas solution. More precisely, let $A \in \mathbb{B}(\mathcal{H},\mathcal{K})$ and $C \in \mathbb{B}(\mathcal{G},\mathcal{K})$ be operators between Hilbert spaces such that $\mathcal{R}(C) \subseteq \mathcal{R}(A)$ and let $M$ be a closed subspace of $\mathcal{H}$ such that $\mathcal{N}(A) \oplus M= \mathcal{H}$. Then there exists a unique solution $X_M$ of the equation $AX_M =C$ such that ${\mathcal R}(X_M)\subseteq M$. The operator $X_M$ is called the \emph{reduced solution} of the equation $AX =C$ for the subspace $M$ in the framework of Hilbert spaces. They parametrized these solutions by employing generalized inverses.

Inner product $C^*$-modules are generalizations of inner product spaces by allowing inner products to take values in some $C^{*}$-algebras instead of the field of complex numbers. More precisely, an inner-product module over a $C^*$-algebra $\mathfrak{A}$ is a right $ \mathfrak{A}$-module equipped with an $ \mathfrak{A}$-valued inner product $\langle \cdot, \cdot \rangle: \mathscr{H} \times \mathscr{H} \to \mathfrak{A}$. If $\mathscr{H}$ is complete with respect to the induced norm defined by $\| x\| = \| \langle x,x\rangle \|^{\frac{1}{2}}\,\,(x\in \mathscr{H})$, then $\mathscr{H}$ is called a \emph{Hilbert $\mathfrak{A}$-module}.

Throughout the rest of this paper, $\mathfrak{A}$ denotes a $C^*$-algebra and $\mathscr{E}, \mathscr{H}, \mathscr{K}$, and $\mathscr{L}$ denote Hilbert $\mathfrak{A}$-modules. Let $\mathcal{L}(\mathscr{H},\mathscr{K})$ be the set of operators $A:\mathscr{H}\to \mathscr{K}$ for which there is an operator $A^*:\mathscr{K}\to
\mathscr{H}$ such that $\langle Ax,y\rangle=\langle x,A^*y\rangle$ for any $x\in \mathscr{H}$ and $y\in \mathscr{K}$. It is known that any element $A \in {\mathcal L}(\mathscr{H},\mathscr{K})$ must be bounded and $\mathfrak{A}$-linear. In general, a bounded operator between Hilbert $C^*$-modules may be not adjointable. We call ${\mathcal L}(\mathscr{H},\mathscr{K})$ the set of all Hermitian (adjointable) operators from $\mathscr{H}$ to $\mathscr{K}$. In the case when
$\mathscr{H}=\mathscr{K}$, $\mathcal{L}(\mathscr{H},\mathscr{H})$, abbreviated to $\mathcal{L}(\mathscr{H})$, is a
$C^*$-algebra. An operator $A\in \mathcal{L}(\mathscr{H})$ is positive if $\langle Ax,x\rangle\geq0$ for all $x\in{\mathcal H }$ (see \cite[Lemma~4.1]{Lance}), and we then write $A\geq 0$. For Hermitian operators $A, B\in\mathcal{L}(\mathscr{H})$, we say $B\geq A$ if $B-A \geq 0$. Let $\mathcal{L}(\mathscr{H})_{sa}$ and $\mathcal{L}(\mathscr{H})_+$ denote the set of Hermitian elements and positive elements in $\mathcal{L}(\mathscr{H})$, respectively.

A closed submodule $M$ of $\mathscr{H}$ is said to be \emph{orthogonally complemented} if $\mathscr{H}=M\oplus M^\perp$, where $M^\perp=\big\{x\in \mathscr{H}: \langle x,y\rangle=0\ \mbox{for any }\ y\in M\big\}$.
In this case, the projection from $\mathscr{H}$ onto $M$ is denoted by $P_M$. If $A\in \mathcal{L}( \mathscr{H}, \mathscr{K})$ does not have closed range, then neither $ {\mathcal N}(A) $ nor $ \overline{{\mathcal R}(A)} $ needs to be orthogonally complemented. In addition, if $A\in \mathcal{L}( \mathscr{H}, \mathscr{K})$ and $ \overline{{\mathcal R}(A^*)} $ is not orthogonally complemented, then it may happen that ${\mathcal N}(A)^{\bot}\neq \overline{{\mathcal R}(A^*)}$; see \cite{Lance, MT}. The above facts show that the theory Hilbert $C^*$-modules are much different and more complicated than that of Hilbert spaces.

There are several extensions of the Douglas factorization theorem in various settings; see \cite{FS, PS} as well as the recent survey \cite{MKX}. A generalization of the Douglas factorization theorem to the Hilbert $C^*$-module case was given as follows in which we do not need to assume that $\overline{{\mathcal R}(A^*)}$ is orthogonally complemented.\\

\begin{theorem} \cite[Corollary 2.5]{Fang-Moslehian-Xu} Let $\mathfrak{A}$ be a $C^*$-algebra, $\mathscr{E},\mathscr{H}$ and $\mathscr{K}$ be Hilbert $\mathfrak{A}$-modules. Let $A\in {\mathcal{L}}(\mathscr{E},\mathscr{K})$ and $A^\prime\in {\mathcal{L}}(\mathscr{H},\mathscr{K})$. Then the following statements are equivalent:
\begin{enumerate}
\item[{\rm (i)}] $A^\prime (A^\prime)^*\le \lambda AA^*$ for some $\lambda>0$;
\item[{\rm (ii)}]There exists $\mu>0$ such that $\Vert (A^\prime)^*z\Vert\le \mu \Vert A^*z\Vert$, for any $z\in \mathscr{K}$.\\
\end{enumerate}
\end{theorem}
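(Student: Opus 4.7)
The forward implication (i) $\Rightarrow$ (ii) is a one-liner. Using the identity $\|x\|^{2}=\|\langle x,x\rangle\|$ valid in any Hilbert $C^{*}$-module together with the monotonicity of the $C^{*}$-norm on the positive cone of $\mathfrak{A}$, one computes
\begin{equation*}
\|(A^{\prime})^{*}z\|^{2}=\|\langle A^{\prime}(A^{\prime})^{*}z,z\rangle\|\le \lambda\,\|\langle AA^{*}z,z\rangle\|=\lambda\,\|A^{*}z\|^{2},
\end{equation*}
so one may take $\mu=\sqrt{\lambda}$. I would dispose of this direction in a single line.

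The substantial direction is (ii) $\Rightarrow$ (i). Set $u_{z}:=\langle(A^{\prime})^{*}z,(A^{\prime})^{*}z\rangle$ and $v_{z}:=\langle A^{*}z,A^{*}z\rangle$, both in $\mathfrak{A}_{+}$. Squaring (ii) yields only the scalar bound $\|u_{z}\|\le\mu^{2}\|v_{z}\|$, which is strictly weaker than the $\mathfrak{A}$-order bound $u_{z}\le\mu^{2}v_{z}$ that is equivalent to $A^{\prime}(A^{\prime})^{*}\le\mu^{2}AA^{*}$. My plan is to exploit the $\mathfrak{A}$-linearity of the operators to inflate the hypothesis: for every $a$ in the unitization $\widetilde{\mathfrak{A}}$ (which acts on $\mathscr{K}$ in the canonical way) the vector $z\cdot a$ still lies in $\mathscr{K}$, and substituting it into (ii) produces
\begin{equation*}
\|a^{*}u_{z}a\|\le \mu^{2}\,\|a^{*}v_{z}a\|\qquad\text{for every }a\in\widetilde{\mathfrak{A}}.
\end{equation*}

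The decisive move is to specialize to the approximate inverse $a_{\varepsilon}:=(v_{z}+\varepsilon)^{-1/2}\in\widetilde{\mathfrak{A}}$ for $\varepsilon>0$. The continuous functional calculus gives $a_{\varepsilon}^{*}v_{z}a_{\varepsilon}=v_{z}(v_{z}+\varepsilon)^{-1}$, a positive element of norm at most $1$, so the displayed inequality above specializes to $\|(v_{z}+\varepsilon)^{-1/2}u_{z}(v_{z}+\varepsilon)^{-1/2}\|\le\mu^{2}$. Now I invoke the sharp characterization that for a positive element $b$ of a unital $C^{*}$-algebra one has $\|b\|\le M$ iff $b\le M\mathbf{1}$; this upgrades the norm estimate to the order estimate $a_{\varepsilon}^{*}u_{z}a_{\varepsilon}\le \mu^{2}\mathbf{1}$. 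Conjugating back by $(v_{z}+\varepsilon)^{1/2}$ yields $u_{z}\le \mu^{2}(v_{z}+\varepsilon)$, and sending $\varepsilon\downarrow 0$ gives $u_{z}\le\mu^{2}v_{z}$ in $\mathfrak{A}$. Since $z\in\mathscr{K}$ was arbitrary, this is exactly $A^{\prime}(A^{\prime})^{*}\le\mu^{2}AA^{*}$, so (i) holds with $\lambda=\mu^{2}$. The principal obstacle---absent in the Hilbert space setting---is precisely this passage from a scalar norm estimate to a $C^{*}$-order estimate; it is overcome by combining the module action (which promotes the pointwise hypothesis to a family parametrized by $\widetilde{\mathfrak{A}}$) with the positivity of $a_{\varepsilon}^{*}u_{z}a_{\varepsilon}$.
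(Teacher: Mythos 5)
The paper does not prove this statement: it is quoted verbatim from \cite[Corollary 2.5]{Fang-Moslehian-Xu} with no argument supplied, so there is no in-paper proof to compare against. Your proof is correct and self-contained, and it follows what is essentially the standard route for this result (and, as far as I can tell, the route of the cited source): the direction (i) $\Rightarrow$ (ii) via $\|\langle A'(A')^*z,z\rangle\|\le\lambda\|\langle AA^*z,z\rangle\|$ and monotonicity of the norm on positives is fine, and the substantive direction correctly identifies the real issue, namely upgrading the scalar inequality $\|u_z\|\le\mu^2\|v_z\|$ to the order inequality $u_z\le\mu^2 v_z$ in $\mathfrak{A}$. The three steps that carry the argument all check out: adjointable operators are automatically $\widetilde{\mathfrak{A}}$-linear for the canonical extension of the module action to the unitization, so substituting $z\cdot a$ into (ii) does give $\|a^*u_za\|\le\mu^2\|a^*v_za\|$; the element $a_\varepsilon=(v_z+\varepsilon\mathbf{1})^{-1/2}$ lies in $\widetilde{\mathfrak{A}}$ and commutes with $v_z$, so $\|a_\varepsilon^*v_za_\varepsilon\|\le 1$; and the implication $0\le b$, $\|b\|\le M$ $\Rightarrow$ $b\le M\mathbf{1}$ is exactly where the passage to the unitization is needed, after which conjugation by $(v_z+\varepsilon\mathbf{1})^{1/2}$ and closedness of the positive cone give $u_z\le\mu^2 v_z$, and Lance's criterion ($T\ge 0$ iff $\langle Tz,z\rangle\ge 0$ for all $z$) converts this into $A'(A')^*\le\mu^2AA^*$ in $\mathcal{L}(\mathscr{K})$. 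I have no corrections.
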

In general, $A^\prime (A^\prime)^*\le \lambda AA^*$ for some $\lambda>0$ does not imply ${\mathcal R}(A^\prime)\subseteq {\mathcal R}(A) $. As an example, let $\mathcal{H}$ be a separable infinite dimensional Hilbert space, let $\mathfrak{A}=\mathscr{H}=\mathscr{K}=\mathbb{B}(\mathcal{H})$ and let $\mathscr{E}$ be the algebra $\mathbb{K}(\mathcal{H})$ of all compact operators. Suppose that $S={\rm diag}(1, 1/2, 1/3, \ldots)$ is the diagonal operator with respect to some orthonormal basis and define $A:\mathscr{E} \to \mathscr{K}$ by $A(T):= ST$ for $T\in \mathfrak{A}$, and set $A^\prime:= (AA^*)^{1/2}$.

We, however, have the following interesting result.
\begin{lemma}\label{lem:two orthogonally complemented conditions} {\rm (\cite[Theorem~3.2]{Fang-Moslehian-Xu} and \cite[Theorem 1.1]{Fang-Yu-Yao})} Let $A\in\mathcal{L}(\mathscr{H},\mathscr{K})$. Then the following statements are equivalent:
\begin{enumerate}
\item[{\rm (i)}] $\overline{{\mathcal R}(A^*)}$ is orthogonally complemented in $\mathscr{H}$;
\item[{\rm (ii)}] Let $C\in\mathcal{L}(\mathscr{L},\mathscr{K})$ be any such that ${\mathcal R}(C)\subseteq {\mathcal R}(A)$. Then the equation
\begin{equation}\label{equ:equation of AX=C}AX=C, X\in \mathcal{L}(\mathscr{L},\mathscr{H})\end{equation} has a reduced solution $D$, that is,
\begin{equation}\label{equ:property of the reduced solution}AD=C, D\in \mathcal{L}(\mathscr{L},\mathscr{H})\ \mbox{and}\ {\mathcal R}(D)\subseteq \overline{\mathcal{R}(A^*)}.\end{equation}
\end{enumerate}
\end{lemma}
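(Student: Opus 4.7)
The plan is to handle the two implications separately.

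For $(\mathrm{ii})\Rightarrow(\mathrm{i})$ I would specialize the hypothesis to the tautological case $\mathscr{L}=\mathscr{H}$, $C=A$, where the range inclusion is immediate. The hypothesis produces some $D\in\mathcal{L}(\mathscr{H})$ with $AD=A$ and $\mathcal{R}(D)\subseteq\overline{\mathcal{R}(A^{*})}$. From $A(I-D)=0$ I obtain $\mathcal{R}(I-D)\subseteq\mathcal{N}(A)$, and since $\mathcal{N}(A)\perp\overline{\mathcal{R}(A^{*})}$ holds for any adjointable operator, the splitting $x=Dx+(I-D)x$ is automatically an orthogonal decomposition into a sum of elements of $\overline{\mathcal{R}(A^{*})}$ and $\mathcal{N}(A)$. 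This yields $\mathscr{H}=\overline{\mathcal{R}(A^{*})}\oplus\mathcal{N}(A)$, the desired orthogonal complementability.

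For $(\mathrm{i})\Rightarrow(\mathrm{ii})$ I would construct the reduced solution $D$ pointwise. With $P:=P_{\overline{\mathcal{R}(A^{*})}}$ (available by hypothesis), for each $z\in\mathscr{L}$ choose any $y_z\in\mathscr{H}$ with $Ay_z=Cz$ (permitted by $\mathcal{R}(C)\subseteq\mathcal{R}(A)$) and set $Dz:=Py_z$. The equality $\mathcal{N}(A)=\overline{\mathcal{R}(A^{*})}^{\perp}$ (valid under the complementability assumption) shows that the choice of $y_z$ is irrelevant modulo $\mathcal{N}(A)$, so $D$ is well defined, is $\mathfrak{A}$-linear by construction, and satisfies $AD=C$ with $\mathcal{R}(D)\subseteq\overline{\mathcal{R}(A^{*})}$ automatically. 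Boundedness of $D$ is a routine closed-graph argument: if $z_n\to z$ and $Dz_n\to w$ in norm, then $Aw=\lim A(Dz_n)=\lim Cz_n=Cz$ and $w\in\overline{\mathcal{R}(A^{*})}$, so $Dz=Pw=w$.

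The main obstacle lies in producing the adjoint $D^{*}$, since boundedness alone does not imply adjointability in Hilbert $C^{*}$-modules. The natural candidate is the map $A^{*}u+v\mapsto C^{*}u$ on the dense submodule $\mathcal{R}(A^{*})+\mathcal{N}(A)$ of $\mathscr{H}=\overline{\mathcal{R}(A^{*})}\oplus\mathcal{N}(A)$; well-definedness follows from $\mathcal{N}(A^{*})=\mathcal{R}(A)^{\perp}\subseteq\mathcal{R}(C)^{\perp}=\mathcal{N}(C^{*})$. Continuity is the delicate point: from the boundedness of $D$ together with the identity $\langle Dz,A^{*}u\rangle=\langle z,C^{*}u\rangle$ one extracts the norm estimate $\|C^{*}u\|\le\|D\|\,\|A^{*}u\|$ via the $C^{*}$-module characterization of the norm as a supremum over inner product pairings, after which the orthogonality $A^{*}u\perp v$ (so $\|A^{*}u+v\|\ge\|A^{*}u\|$) delivers the required bound and the continuous extension of $D^{*}$ to all of $\mathscr{H}$. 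This adjointability step is precisely where the orthogonal complementability of $\overline{\mathcal{R}(A^{*})}$ is indispensable, and it constitutes the principal difference between the $C^{*}$-module setting and its Hilbert-space analogue.
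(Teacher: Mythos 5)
This lemma is stated in the paper as an imported result (cited from \cite[Theorem~3.2]{Fang-Moslehian-Xu} and \cite[Theorem~1.1]{Fang-Yu-Yao}) and the paper supplies no proof of it, so there is no in-text argument to compare against; I can only assess your proof on its own terms, and it is essentially correct and complete. Your (ii)$\Rightarrow$(i) step is the standard specialization $C=A$: from $AD=A$ and $\mathcal{R}(D)\subseteq\overline{\mathcal{R}(A^*)}$ the splitting $x=Dx+(I-D)x$ lands in $\overline{\mathcal{R}(A^*)}+\mathcal{N}(A)$, and since $\mathcal{N}(A)\subseteq\overline{\mathcal{R}(A^*)}^{\perp}$ this forces $\mathcal{N}(A)=\overline{\mathcal{R}(A^*)}^{\perp}$ and hence $\mathscr{H}=\overline{\mathcal{R}(A^*)}\oplus\overline{\mathcal{R}(A^*)}^{\perp}$, which is exactly the definition of orthogonal complementability used in the paper. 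Your (i)$\Rightarrow$(ii) correctly identifies adjointability of $D$ as the only nontrivial issue, and the estimate $\|C^*u\|\le\|D\|\,\|A^*u\|$ obtained from $\langle z,C^*u\rangle=\langle Dz,A^*u\rangle$ with $z=C^*u$, combined with $\|A^*u+v\|\ge\|A^*u\|$ for $v\in\mathcal{N}(A)$, does give a bounded, well-defined candidate adjoint on the dense submodule $\mathcal{R}(A^*)+\mathcal{N}(A)$. Two small points you should tidy up: first, the identity $\mathcal{N}(A)=\overline{\mathcal{R}(A^*)}^{\perp}$ holds for \emph{every} adjointable operator, with no complementability hypothesis (what can fail without (i) is $\mathcal{N}(A)^{\perp}=\overline{\mathcal{R}(A^*)}$), so your parenthetical caveat is unnecessary; second, after extending the candidate $E\colon A^*u+v\mapsto C^*u$ by continuity you should record the one-line verification that it really is the adjoint, namely $\langle Dz,A^*u+v\rangle=\langle ADz,u\rangle+\langle Dz,v\rangle=\langle z,C^*u\rangle$ because $Dz\in\overline{\mathcal{R}(A^*)}$ annihilates $v\in\mathcal{N}(A)$, and then pass to the closure by density. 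With those cosmetic additions the argument stands as a valid self-contained proof of the cited lemma.
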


It is remarkable that such a reduced solution (if it exists) is unique, and for Hilbert space operators as well as adjointable operators on Hilbert $C^*$-modules, most literatures on the solvability of equation \eqref{equ:equation of AX=C} are only focused on the regular case \cite{Dajic-Koliha, WW}, that is, the ranges of $A$ and the other associated operators are assumed to be closed. Very little has been done in the case when the associated operators are non-regular, which is the concern of this paper.

In view of the equivalence of Lemma~\ref{lem:two orthogonally complemented conditions}~(i) and (ii), the term of the semi-regularity for adjointable operators is introduced in this paper (see Definition~\ref{defn of semi-regularity}).
Such a semi-regularity condition is somehow natural in dealing with the solvability of equation \eqref{equ:equation of AX=C},
since it is always true for Hilbert space operators and if it fails to be satisfied, then equation \eqref{equ:equation of AX=C} may be unsolvable.
Furthermore, it is noted that for an adjointable operator $A$, $A$ is semi-regular if and only if $A$ has the polar decomposition $A=U|A|$ \cite[Proposition~15.3.7]{Wegge-Olsen}. So instead of the Moore-Penrose inverse in the regular case, one might use the partial isometry in the semi-regular case.
By utilizing partial isometries, we present the general solution of equation \eqref{equ:equation of AX=C} when $A$ is a semi-regular operator. For such an operator $A$,
the Hermitian solutions and the positive solutions of equation \eqref{equ:equation of AX=C} have been completely characterized in Section~\ref{sec:general solution to AX=C}
 of this paper; see Theorems~\ref{thm:Hermitian solution of AX=C}
and \ref{thm:technique result of positive solution of AX=C}. As a result, certain mistakes in \cite[Section~1]{Fang-Yu-Yao} are corrected
for adjointable operators on Hilbert $C^*$-modules, and
some generalizations of \cite[Section~3]{Liang-Deng} are obtained from the Hilbert space case to the Hilbert $C^*$-module case.

The shorted operators initiated in \cite{Anderson-Duffin} for Hermitian positive semi-definite matrices and generalized in \cite{Fillmore-Williams} for Hilbert space operators,
are closely related to the operator equation $(A+B)^\frac12 X=A^\frac12$, where $A$ and $B$ are two positive operators.
Such an operator equation is always solvable when the underlying spaces are Hilbert spaces. To show that the same is not true for adjointable operators on Hilbert $C^*$-modules, we focus on the special case that both $A$ and $B$ are projections.
In the last section of this paper, we provide a tricky counterexample to show that there exist a $C^*$-algebra $\mathfrak{A}$, a Hilbert $\mathfrak{A}$-module $\mathscr{H}$ and two projections $P$ and $Q$ on $\mathscr{H}$ such that the operator equation $(P+Q)^{1/2}X=P, X\in\mathcal{L}(\mathscr{H})$ has no solution. Moreover, given projections $P,Q\in\mathcal{L}(\mathscr{H})$, we show that for any $\varepsilon\in (0,1)$, there exists a projection $Q'\in\mathcal{L}(\mathscr{H})$ such that $\|Q-Q'\|<\varepsilon$ and the equation $(P+Q')^{1/2}X=P, X\in\mathcal{L}(\mathscr{H})$ has a solution.

\section{Solutions of the operator equation $AX=C$}\label{sec:general solution to AX=C}

We begin with the definition of the semi-regularity as follows:
\begin{definition}\label{defn of semi-regularity}An operator $A\in\mathcal{L}(\mathscr{H},\mathscr{K})$ is said to be \emph{semi-regular} if $\overline{\mathcal{R}(A)}$ and $\overline{\mathcal{R}(A^*)}$ are orthogonally complemented in $\mathscr{K}$ and $\mathscr{H}$, respectively.
\end{definition}

\begin{remark}Recall that $A\in\mathcal{L}(\mathscr{H},\mathscr{K})$ is said to be \emph{regular} if $\mathcal{R}(A)$ is closed. In this case, the Moore-Penrose inverse $A^\dag$ of $A$ exists. This is an operator $A^\dag$ such that $AA^\dag A=A$, $A^\dag AA^\dag=A^\dag$, and $AA^\dag=P_{\mathcal{R}(A)}$ and $A^\dag A=P_{\mathcal{R}(A^*)}$ are projections (\cite[Theorem~2.2]{Xu-Sheng}). Hence $A$ is semi-regular.
\end{remark}

\begin{lemma}\label{lem:Wegge-Olsen}{\rm\cite[Proposition~15.3.7]{Wegge-Olsen}}\ Let $A\in\mathcal{L}(\mathscr{H},\mathscr{K})$ be semi-regular. Then there exists a unique partial isometry $U_A\in\mathcal{L}(\mathscr{H},\mathscr{K})$ such that
\begin{equation}\label{equ:equations associated the polar decomposition}A=U_A(A^*A)^{\frac{1}{2}}\ \mbox{and}\ U_A^*U_A=P_{\overline{\mathcal{R}(A^*)}}.
\end{equation}
\end{lemma}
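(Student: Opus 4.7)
The plan is to build $U_A$ by exploiting the pointwise inner-product identity $\langle |A|x,|A|y\rangle = \langle Ax,Ay\rangle$, extending it by continuity to an isometry between closed submodules, and then using the two orthogonal complementations guaranteed by semi-regularity both to make the extension total on $\mathscr{H}$ and to secure its adjointability.

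Setting $|A| := (A^*A)^{1/2}$, the identity $\langle |A|x,|A|y\rangle = \langle A^*Ax,y\rangle = \langle Ax,Ay\rangle$ shows that $V_0 : |A|x \mapsto Ax$ is a well-defined inner-product-preserving module map from $\mathcal{R}(|A|)$ onto $\mathcal{R}(A)$, and hence extends uniquely to an inner-product-preserving bijection $V : \overline{\mathcal{R}(|A|)} \to \overline{\mathcal{R}(A)}$. I would then invoke the standard identity $\overline{\mathcal{R}(|A|)} = \overline{\mathcal{R}(A^*)}$ (provable via the inclusion $\mathcal{R}(A^*A) \subseteq \mathcal{R}(|A|) \cap \mathcal{R}(A^*)$ together with the density of $\mathcal{R}(T^2)$ in $\mathcal{R}(T)$ for a positive operator $T$, obtained from the continuous functional calculus of $|A|$). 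Since $\overline{\mathcal{R}(A^*)}$ is orthogonally complemented in $\mathscr{H}$ by semi-regularity, I define
\begin{equation*}
U_A := V \circ P_{\overline{\mathcal{R}(A^*)}} : \mathscr{H} \to \mathscr{K},
\end{equation*}
so that $U_A |A| = A$ holds on all of $\mathscr{H}$ and $U_A$ vanishes on $\overline{\mathcal{R}(A^*)}^\perp$.

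The main obstacle is showing that $U_A$ actually lies in $\mathcal{L}(\mathscr{H},\mathscr{K})$, since a bounded module map between Hilbert $C^*$-modules need not be adjointable; this is precisely where the second half of semi-regularity, orthogonal complementation of $\overline{\mathcal{R}(A)}$ in $\mathscr{K}$, is used decisively. With the candidate $W := V^{-1} \circ P_{\overline{\mathcal{R}(A)}}$, where $V^{-1} : \overline{\mathcal{R}(A)} \to \overline{\mathcal{R}(A^*)}$ is the inverse of $V$, the chain
\begin{equation*}
\langle U_A x, z\rangle = \langle V P_{\overline{\mathcal{R}(A^*)}} x, P_{\overline{\mathcal{R}(A)}} z\rangle = \langle P_{\overline{\mathcal{R}(A^*)}} x, V^{-1} P_{\overline{\mathcal{R}(A)}} z\rangle = \langle x, W z\rangle
\end{equation*}
(with the middle step using that $V$ preserves $\mathfrak{A}$-valued inner products) identifies $W$ as $U_A^*$, from which $U_A^*U_A = P_{\overline{\mathcal{R}(A^*)}}$ is immediate, confirming the partial-isometry property. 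For uniqueness, any partial isometry $W'$ satisfying \eqref{equ:equations associated the polar decomposition} obeys $W' |A| x = A x = U_A |A| x$ for all $x$, hence agrees with $U_A$ on the dense subset $\mathcal{R}(|A|) \subseteq \overline{\mathcal{R}(A^*)}$ and so on $\overline{\mathcal{R}(A^*)}$ by continuity, while both vanish on $\overline{\mathcal{R}(A^*)}^\perp$ by the initial-projection condition; thus $W' = U_A$.
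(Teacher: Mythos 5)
The paper itself offers no proof of this lemma: it is quoted verbatim from Wegge-Olsen \cite[Proposition~15.3.7]{Wegge-Olsen}. Your argument is a correct, self-contained reconstruction of the standard proof, and its architecture is sound: the identity $\langle |A|x,|A|y\rangle=\langle Ax,Ay\rangle$ gives the isometric module map $V:\overline{\mathcal{R}(|A|)}\to\overline{\mathcal{R}(A)}$; complementability of $\overline{\mathcal{R}(A^*)}$ turns $V$ into a globally defined map $U_A=V\circ P_{\overline{\mathcal{R}(A^*)}}$ with $U_A|A|=A$; complementability of $\overline{\mathcal{R}(A)}$ is exactly what makes $W=V^{-1}\circ P_{\overline{\mathcal{R}(A)}}$ an adjoint, which is the genuinely module-theoretic point of the lemma; and the uniqueness argument via density of $\mathcal{R}(|A|)$ in the initial space plus the initial-projection condition is complete. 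This correctly isolates where each half of semi-regularity enters, which the citation in the paper leaves implicit.

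One small point deserves tightening. You justify $\overline{\mathcal{R}(|A|)}=\overline{\mathcal{R}(A^*)}$ by the inclusion $\mathcal{R}(A^*A)\subseteq\mathcal{R}(|A|)\cap\mathcal{R}(A^*)$ together with density of $\mathcal{R}(T^2)$ in $\overline{\mathcal{R}(T)}$ for positive $T$. Applied to $T=|A|$ this yields $\overline{\mathcal{R}(|A|)}=\overline{\mathcal{R}(A^*A)}\subseteq\overline{\mathcal{R}(A^*)}$, but it does not by itself give the reverse inclusion $\overline{\mathcal{R}(A^*)}\subseteq\overline{\mathcal{R}(A^*A)}$, which your construction genuinely needs (the domain of $V$ must contain all of $P_{\overline{\mathcal{R}(A^*)}}\mathscr{H}$, and uniqueness needs $\mathcal{R}(|A|)$ dense in $\overline{\mathcal{R}(A^*)}$). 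That inclusion is also standard and needs no complementability, but it requires a separate approximation, e.g. $A^*u=\lim_{n\to\infty}A^*A\,(A^*A+\tfrac1n)^{-1}A^*u$, where the error term $\tfrac1n(A^*A+\tfrac1n)^{-1}A^*u$ tends to $0$ because $\|\tfrac1{n^2}\,AA^*(AA^*+\tfrac1n)^{-2}\|\le\tfrac1{4n}$. With that one line supplied, the proof is complete.
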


\begin{theorem}\label{thm:solution of AX=C} Let $A\in\mathcal{L}(\mathscr{H},\mathscr{K})$ be semi-regular.
Then for any $C\in\mathcal{L}(\mathscr{L},\mathscr{K})$, operator equation \eqref{equ:equation of AX=C}
has a solution if and only if $\mathcal{R}(C)\subseteq\mathcal{R}(A)$. In such case, the general solution of \eqref{equ:equation of AX=C} has the form
\begin{equation}\label{equ:form of solutions to AX=C}X=D+(I-U_A^*U_A)Y,\end{equation}
where $D\in\mathcal{L}(\mathscr{L},\mathscr{H})$ is the reduced solution of \eqref{equ:equation of AX=C}, $U_A\in\mathcal{L}(\mathscr{H},\mathscr{K})$ is the partial isometry satisfying \eqref{equ:equations associated the polar decomposition}, and $Y\in\mathcal{L}(\mathscr{L},\mathscr{H})$ is arbitrary.
\end{theorem}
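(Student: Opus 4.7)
The plan is to prove both the existence criterion and the parametrization of the solution set in sequence, leaning on Lemma~\ref{lem:two orthogonally complemented conditions} and Lemma~\ref{lem:Wegge-Olsen} for the main technical inputs. The forward direction of the equivalence is immediate: if $AX=C$, then $\mathcal{R}(C)=\mathcal{R}(AX)\subseteq \mathcal{R}(A)$. For the reverse direction, since $A$ is semi-regular the subspace $\overline{\mathcal{R}(A^*)}$ is orthogonally complemented, and Lemma~\ref{lem:two orthogonally complemented conditions} supplies a reduced solution $D\in\mathcal{L}(\mathscr{L},\mathscr{H})$ with $AD=C$ and $\mathcal{R}(D)\subseteq\overline{\mathcal{R}(A^*)}$, yielding both existence and the candidate base point for the parametrization.

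For the parametrization, I would first verify that every $X=D+(I-U_A^*U_A)Y$ solves $AX=C$. The content reduces to the identity $AU_A^*U_A=A$, equivalently $A(I-U_A^*U_A)=0$. Writing $A=U_A|A|$ with $|A|=(A^*A)^{1/2}$ via Lemma~\ref{lem:Wegge-Olsen}, it suffices to show $|A|U_A^*U_A=|A|$. The factorization $A^*A=|A|\,U_A^*U_A\,|A|$ (substitute $A=U_A|A|$ into $A^*A$) forces $|A|(I-U_A^*U_A)|A|=0$; since $I-U_A^*U_A$ is a self-adjoint projection, pairing with an arbitrary $x$ gives $\langle(I-U_A^*U_A)|A|x,(I-U_A^*U_A)|A|x\rangle=0$, so $(I-U_A^*U_A)|A|=0$, and taking adjoints delivers the desired identity. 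Substitution then yields $AX=AD+0=C$.

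For the converse direction, given any solution $X$ set $Z=X-D$, so $AZ=0$ and hence $\mathcal{R}(Z)\subseteq \mathcal{N}(A)$. Adjointability always gives $\mathcal{N}(A)=\mathcal{R}(A^*)^\perp=\overline{\mathcal{R}(A^*)}^\perp$, and semi-regularity of $A$ upgrades this to the orthogonal decomposition $\mathscr{H}=\overline{\mathcal{R}(A^*)}\oplus\mathcal{N}(A)$. Consequently the projection $I-U_A^*U_A$ equals $P_{\mathcal{N}(A)}$, and since $\mathcal{R}(Z)\subseteq\mathcal{N}(A)$ we conclude $(I-U_A^*U_A)Z=Z$. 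Choosing $Y=Z$ exhibits $X=D+(I-U_A^*U_A)Y$, with $(I-U_A^*U_A)Y$ automatically adjointable.

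The main obstacle I anticipate is the identity $AU_A^*U_A=A$ in the $C^*$-module setting, where one cannot simply invoke $\overline{\mathcal{R}(|A|)}=\overline{\mathcal{R}(A^*)}$ as transparently as over Hilbert spaces. The factorization $A^*A=|A|U_A^*U_A|A|$ combined with the projection property of $I-U_A^*U_A$ sidesteps this cleanly, relying only on the polar decomposition furnished by Lemma~\ref{lem:Wegge-Olsen} and the defining relation $U_A^*U_A=P_{\overline{\mathcal{R}(A^*)}}$; every other step is a direct use of adjointability and semi-regularity.
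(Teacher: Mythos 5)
Your proposal is correct and follows essentially the same route as the paper: existence via the reduced solution supplied by Lemma~\ref{lem:two orthogonally complemented conditions}, and the parametrization via the identity $A(I-U_A^*U_A)=0$ together with the fact that $I-U_A^*U_A$ acts as the identity on $\mathcal{N}(A)$. The only cosmetic difference is that you establish $A(I-U_A^*U_A)=0$ by a computation with the polar decomposition ($A^*A=|A|U_A^*U_A|A|$), whereas the paper reads it off directly from $U_A^*U_A=P_{\overline{\mathcal{R}(A^*)}}$ and $\overline{\mathcal{R}(A^*)}=\mathcal{N}(A)^{\bot}$ --- a fact you invoke anyway in the converse direction.
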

\begin{proof} Suppose that $\mathcal{R}(C)\subseteq\mathcal{R}(A)$. By Lemma~\ref{lem:two orthogonally complemented conditions},
equation \eqref{equ:equation of AX=C} is solvable and its reduced solution $D$ satisfies \eqref{equ:property of the reduced solution}.
Since $U_A^*U_A=P_{\overline{\mathcal{R}(A^*)}}$ and $\overline{\mathcal{R}(A^*)}=\mathcal{N}(A)^\bot$, we have
\begin{align}\label{msm1}
\mathcal{N}(U_A)=\mathcal{N}(A)\ \mbox{and}\ A(I-U_A^*U_A)=0.
\end{align}
Therefore, any $X$ of the form \eqref{equ:form of solutions to AX=C} is a solution of equation \eqref{equ:equation of AX=C}.

On the other hand, given any solution $X$ of equation \eqref{equ:equation of AX=C}, we have
$$X-D\in\mathcal{N}(A)=\mathcal{N}(U_A)=\mathcal{N}(U_A^*U_A),$$
which leads to
$$X-D=(I-U_A^*U_A)(X-D),$$
hence $X$ has the form of \eqref{equ:form of solutions to AX=C} with $Y=X-D$ therein.
\end{proof}

\begin{remark} Suppose that $A\in\mathcal{L}(\mathscr{H},\mathscr{K})$ is regular and $C\in\mathcal{L}(\mathscr{L},\mathscr{K})$ is given such that $\mathcal{R}(C)\subseteq \mathcal{R}(A)$. Then $A^\dag C$ is the reduced solution of \eqref{equ:equation of AX=C}, so the general solution of \eqref{equ:equation of AX=C} has the form \eqref{equ:form of solutions to AX=C} with $D$ and $U_A^*U_A$ being replaced by $A^\dag C$ and $A^\dag A$, respectively.
\end{remark}

To study the Hermitian solutions of \eqref{equ:equation of AX=C}, we need the following lemmas.

\begin{lemma}\label{lem:rang characterization-1}{\rm \cite[Proposition~2.7]{Liu-Luo-Xu}} Let $A\in\mathcal{L}(\mathscr{H},\mathscr{K})$ and $B,C\in\mathcal{L}(\mathscr{E},\mathscr{H})$ be such that $\overline{\mathcal{R}(B)}=\overline{\mathcal{R}(C)}$. Then $\overline{\mathcal{R}(AB)}=\overline{\mathcal{R}(AC)}$.
\end{lemma}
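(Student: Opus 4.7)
The plan is to exploit the fact that every adjointable operator between Hilbert $C^*$-modules is automatically bounded (as noted in the introduction), hence norm-continuous, together with the elementary topological fact that a continuous map $f$ satisfies $f(\overline{S})\subseteq \overline{f(S)}$ for every subset $S$. No Hilbert $C^*$-module-specific machinery (polar decomposition, orthogonal complementability, adjoints, etc.) should be needed; the statement is essentially a statement about continuous linear maps between normed spaces.

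Concretely, the first step is to observe that $\mathcal{R}(B)\subseteq\overline{\mathcal{R}(B)}=\overline{\mathcal{R}(C)}$, so applying $A$ and using continuity gives
\[
\mathcal{R}(AB)=A\bigl(\mathcal{R}(B)\bigr)\subseteq A\bigl(\overline{\mathcal{R}(C)}\bigr)\subseteq \overline{A(\mathcal{R}(C))}=\overline{\mathcal{R}(AC)}.
\]
Taking closures yields $\overline{\mathcal{R}(AB)}\subseteq \overline{\mathcal{R}(AC)}$. The second step is simply to interchange the roles of $B$ and $C$: since the hypothesis $\overline{\mathcal{R}(B)}=\overline{\mathcal{R}(C)}$ is symmetric, the same argument produces $\overline{\mathcal{R}(AC)}\subseteq \overline{\mathcal{R}(AB)}$, and combining the two inclusions completes the proof.

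There is essentially no obstacle to this argument; the only thing that has to be checked is that adjointable operators on Hilbert $C^*$-modules are continuous in the norm topology, which is standard (and recalled in the introductory section). Were one to attempt a sequential formulation instead (approximating elements of $\mathcal{R}(B)$ by sequences in $\mathcal{R}(C)$), one would need separability or at least net-based continuity, but the cleaner ``continuous image of closure'' phrasing sidesteps this entirely.
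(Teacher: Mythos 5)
Your proof is correct. The paper does not prove this lemma at all---it is quoted verbatim from \cite[Proposition~2.7]{Liu-Luo-Xu}---so there is no in-paper argument to compare against; your elementary route (boundedness of adjointable operators plus $A(\overline{S})\subseteq\overline{A(S)}$ for continuous $A$, then symmetry in $B$ and $C$) is exactly the standard one and needs no module-specific machinery. One minor correction to your closing remark: a sequential formulation would also be fine without any separability hypothesis, since Hilbert $C^*$-modules are normed (hence metric) spaces, where the closure of a set coincides with its sequential closure.
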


\begin{lemma}\label{lem:equivalence of Hermitian and positive} Let $A,C\in\mathcal{L}(\mathscr{H},\mathscr{K})$ be such that $A$ is semi-regular and $\mathcal{R}(C)\subseteq \mathcal{R}(A)$. Let $D\in\mathcal{L}(\mathscr{H})$ be the reduced solution of \eqref{equ:equation of AX=C} with $\mathscr{L}=\mathscr{H}$ therein and $P=U_A^*U_A$, where $U_A\in\mathcal{L}(\mathscr{H},\mathscr{K})$ is the partial isometry satisfying \eqref{equ:equations associated the polar decomposition}. Then the following statements are valid:
\begin{enumerate}
\item[{\rm (i)}] $DP$ is Hermitian if and only if $CA^*$ is Hermitian;
\item[{\rm (ii)}] $DP$ is positive if and only if $CA^*$ is positive;
\item[{\rm (iii)}] If $CA^*$ is Hermitian and regular, then $DP$ is also regular.
\end{enumerate}
\end{lemma}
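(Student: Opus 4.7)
The plan is to handle the three parts in order, with (i) and (ii) sharing a common template and (iii) demanding a range-chasing argument through the polar decomposition. Throughout I will exploit three standing identities: $PD = D$ (since $\mathcal{R}(D) \subseteq \overline{\mathcal{R}(A^*)} = P\mathscr{H}$), $|A|P = P|A| = |A|$ (since $\overline{\mathcal{R}(|A|)} = \overline{\mathcal{R}(A^*)} = P\mathscr{H}$), and the resulting $AP = A$, $PA^* = A^*$, together with $U_A^* A = |A|$ and $A^* U_A = |A|$.

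For (i), my strategy is to prove the chain
\[
CA^* = (CA^*)^* \;\Longleftrightarrow\; A(D-D^*)A^* = 0 \;\Longleftrightarrow\; P(D-D^*)P = 0 \;\Longleftrightarrow\; DP = (DP)^*.
\]
The first equivalence is immediate from $CA^* = ADA^*$. The last is algebraic: combining $PD = D$ with its adjoint $D^*P = D^*$ gives $PDP = DP$ and $PD^*P = PD^*$, so $P(D-D^*)P = DP - PD^* = DP - (DP)^*$. One direction of the middle equivalence follows from $AP = A$ and $PA^* = A^*$. For the converse, $A(D-D^*)A^* = 0$ yields $\langle(D-D^*)A^*x, A^*x\rangle = 0$, hence $\langle(D-D^*)z, z\rangle = 0$ for $z \in \mathcal{R}(A^*)$; extending by continuity to $z \in \overline{\mathcal{R}(A^*)} = P\mathscr{H}$ and invoking the polarization identity for adjointable operators gives $P(D-D^*)P = 0$.

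For (ii), the same template transfers to positivity: $DP \geq 0$ trivially implies $CA^* = A(DP)A^* \geq 0$, while conversely $CA^* \geq 0$ gives $\langle Dz, z\rangle \geq 0$ for $z \in \mathcal{R}(A^*)$, extended by continuity to $P\mathscr{H}$, so that $\langle DPy, y\rangle = \langle DPy, Py\rangle = \langle Dz, z\rangle \geq 0$ with $z = Py$.

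For (iii), the plan is to carry the closed-range property from $CA^*$ down to $DP$ through the polar decomposition, with Lemma \ref{lem:rang characterization-1} as the main lever. Substituting $A = U_A|A|$ and using $U_A^* A = |A|$ produces $|A|D|A| = U_A^*(CA^*)U_A$. Since $(CA^*)^* = AD^*A^*$ forces $\mathcal{R}(CA^*) \subseteq \overline{\mathcal{R}(A)}$ and $U_A^*$ restricts to an isometric isomorphism $\overline{\mathcal{R}(A)} \to P\mathscr{H}$, the closed range of $CA^*$ transfers to closed range of $\mathcal{R}(|A|D|A|) = U_A^*\mathcal{R}(CA^*)$. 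Rewriting $|A|D|A| = |A|(DP)|A|$ via $D|A| = (DP)|A|$, I apply Lemma \ref{lem:rang characterization-1} with $\overline{\mathcal{R}(|A|)} = \overline{\mathcal{R}(P)} = P\mathscr{H}$ to obtain $\overline{\mathcal{R}(|A|DP|A|)} = \overline{\mathcal{R}(|A|DP)}$; sandwiching $\mathcal{R}(|A|DP|A|) \subseteq \mathcal{R}(|A|DP) \subseteq \overline{\mathcal{R}(|A|DP)}$ forces $\mathcal{R}(|A|DP) = \mathcal{R}(|A|D|A|)$ to be closed. Taking adjoints (with $DP$ Hermitian by (i)) makes $\mathcal{R}(DP|A|)$ closed, and one further application of Lemma \ref{lem:rang characterization-1} together with the same sandwich concludes that $\mathcal{R}(DP)$ is closed. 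The main obstacle lies precisely here: because $|A|$ itself need not be regular, I cannot invert it, and must instead propagate closedness on both sides through the density $\overline{\mathcal{R}(|A|)} = P\mathscr{H}$ combined with an adjoint-symmetry step, rather than via explicit pseudoinverses.
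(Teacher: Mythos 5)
Your parts (i) and (ii) are correct and follow essentially the paper's own argument: reduce everything to the quadratic/sesquilinear form on $\mathcal{R}(A^*)$ via $C=AD$ and $CA^*=AC^*$, extend by density to $P\mathscr{H}=\overline{\mathcal{R}(A^*)}$, and use $PD=D$ to identify $P(D-D^*)P$ with $DP-(DP)^*$ (the paper phrases this through the identities $\langle DPx,y\rangle=\langle DPx,Py\rangle$ rather than polarization, but the content is the same). Part (iii), however, is a genuinely different route. The paper argues sequentially: given $DPx_n\to x=Px$, it pushes forward by $A$, uses Lemma~\ref{lem:rang characterization-1} to get $\overline{\mathcal{R}(CP)}=\mathcal{R}(CA^*)$, writes the limit as $AC^*u$, and pulls back through $\mathcal{N}(A)$ to land $x=DPA^*u$ in $\mathcal{R}(DP)$. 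You instead conjugate by the partial isometry to transfer closedness of $\mathcal{R}(CA^*)$ to $\mathcal{R}(|A|D|A|)$, and then strip off the two factors of $|A|$ by two applications of Lemma~\ref{lem:rang characterization-1} with a sandwich argument, using in between the fact that an adjointable operator with closed range has an adjoint with closed range. All the steps check out (including $\mathcal{R}(|A|D|A|)=U_A^*\mathcal{R}(CA^*)$, which rests on $A^*U_AU_A^*=A^*$ and the isometry of $U_A^*$ on $\overline{\mathcal{R}(A)}$). The trade-off: your argument is cleaner and more structural but imports the nontrivial closed-range theorem for adjoints (\cite[Theorem~3.2]{Lance}) — a result the paper does cite elsewhere, so this is legitimate — whereas the paper's sequence-chasing proof is self-contained modulo Lemma~\ref{lem:rang characterization-1}. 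You should make the appeal to Lance's theorem explicit at the "taking adjoints" step, since closedness of $\mathcal{R}(T^*)$ from closedness of $\mathcal{R}(T)$ is a theorem about Hilbert $C^*$-modules, not a formality.
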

\begin{proof} By \eqref{equ:equations associated the polar decomposition} and \eqref{equ:property of the reduced solution}, we have
\begin{equation}\label{equ:property of the reduced solution+1}PD=D\ \mbox{and thus}\ D^*(I-P)=0,\end{equation}
which leads to
\begin{equation}\label{equ:verification of hermitian-1}\langle DPx,y\rangle=\langle DPx,Py\rangle\ \mbox{and}\ \langle PD^*x,y\rangle=\langle D^*Px,Py\rangle,\ \mbox{for any $x,y\in \mathscr{H}$.}
\end{equation}

(i) ``$\Longrightarrow$": If $DP$ is Hermitian (positive), then
$CA^*=(AD)(PA^*)=A(DP)A^*$ is also Hermitian (positive).

``$\Longleftarrow$": For any $u,v\in \mathscr{K}$,
\begin{align*}\langle D A^*u, A^*v\rangle&=\langle A^*u, D^*A^*v\rangle=\langle A^*u, C^*v\rangle=\langle CA^*u, v\rangle\\
&=\langle AC^*u, v\rangle=\langle C^*u, A^*v\rangle=\langle D^*A^*u, A^*v\rangle,
\end{align*}
which implies that
$$\langle DPx, Py\rangle=\langle D^*Px, Py\rangle,\quad {\rm ~for~all~} x,y\in \mathscr{H}.$$
The equation above together with \eqref{equ:verification of hermitian-1} yields $DP=(DP)^*$.

(ii) ``$\Longleftarrow$": For any $u\in \mathscr{K}$,
\begin{equation*}\langle D A^*u, A^*u\rangle=\langle A^*u, C^*u\rangle=\langle CA^*u, u\rangle\ge 0,
\end{equation*}
which gives, by \eqref{equ:verification of hermitian-1}, that
$$\langle DPx, x\rangle=\langle D Px, Px\rangle\ge 0, \ \mbox{for any $x\in \mathscr{H}$}.$$

(iii) By Lemma~\ref{lem:rang characterization-1}, we have
$$\mathcal{R}(CA^*)=\overline{\mathcal{R}(CA^*)}=\overline{\mathcal{R}(CP)}\supseteq \mathcal{R}(CP)\supseteq \mathcal{R}(CA^*),$$
hence
\begin{equation}\label{equ:ranges are equal--1} \overline{\mathcal{R}(CP)}=\mathcal{R}(CP)=\mathcal{R}(CA^*).\end{equation}

Given any $x\in\overline{\mathcal{R}(DP)}$, there exists a sequence $\{x_n\}$ in $\mathscr{H}$ such that $DPx_n\to x=Px$ (since $D=PD$).
Then
$$CPx_n=ADPx_n\to Ax=CA^*u=AC^*u\ \mbox{for some $u\in \mathscr{K}$ (see \eqref{equ:ranges are equal--1})}.$$
Hence, from \eqref{msm1}, we have $x-C^*u\in \mathcal{N}(A)=\mathcal{N}(U_A)$. Therefore, $Px=PC^*u$.
It follows that
$$x=Px=PC^*u=PD^*A^*u=DPA^*u\in\mathcal{R}(DP),$$
since $PD^*=DP$ by item (i) of this lemma (as $CA^*$ is Hermitian).
This completes the proof that $\overline{\mathcal{R}(DP)}=\mathcal{R}(DP)$.
\end{proof}

Now, we consider the Hermitian solutions of equation \eqref{equ:equation of AX=C}.
\begin{theorem}\label{thm:Hermitian solution of AX=C} Let $A\in\mathcal{L}(\mathscr{H},\mathscr{K})$ be semi-regular.
Then for any $C\in\mathcal{L}(\mathscr{H},\mathscr{K})$, the system
\begin{equation}\label{equ:Hermitian equation of AX=C}AX=C, X\in \mathcal{L}(\mathscr{H})_{sa}\end{equation}
has a solution if and only if
\begin{equation}\label{equ:Hermitian solvable conditions for AX=C}\mathcal{R}(C)\subseteq\mathcal{R}(A)\ \mbox{and}\ CA^*\ \mbox{is Hermitian}.\end{equation} In such case, the general solution of \eqref{equ:Hermitian equation of AX=C} has the form
\begin{equation}\label{equ:form of Hermitian solutions to AX=C}X=D+(I-U_A^*U_A)D^*+(I-U_A^*U_A)Y(I-U_A^*U_A),\end{equation}
where $D\in\mathcal{L}(\mathscr{H})$ is the reduced solution of \eqref{equ:equation of AX=C} with $\mathscr{L}=\mathscr{H}$ therein, $U_A\in\mathcal{L}(\mathscr{H},\mathscr{K})$ is the partial isometry satisfying \eqref{equ:equations associated the polar decomposition}, and $Y\in\mathcal{L}(\mathscr{H})_{sa}$ is arbitrary.
\end{theorem}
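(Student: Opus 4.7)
The plan is to handle the necessity of the conditions first, then verify the proposed parametrization gives Hermitian solutions, and finally show every Hermitian solution has this form. For necessity, if $X$ is a Hermitian solution of $AX=C$, then $\mathcal{R}(C)=\mathcal{R}(AX)\subseteq\mathcal{R}(A)$ and $CA^*=AXA^*=AX^*A^*=(AXA^*)^*=(CA^*)^*$, so both conditions in \eqref{equ:Hermitian solvable conditions for AX=C} are forced. This part is essentially a one-line computation.

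For the sufficiency direction, assume \eqref{equ:Hermitian solvable conditions for AX=C} holds and set $P=U_A^*U_A$. Lemma~\ref{lem:equivalence of Hermitian and positive}(i) then provides the key identity $DP=PD^*$, and \eqref{equ:property of the reduced solution+1} gives $PD=D$ and $D^*P=D^*$; I also need $A(I-P)=0$ from \eqref{msm1}. Armed with these, I would first verify that any $X$ of the form \eqref{equ:form of Hermitian solutions to AX=C} is a Hermitian solution: applying $A$ on the left kills the last two summands and produces $AD=C$; taking adjoints term by term gives $X^*=D^*+D(I-P)+(I-P)Y(I-P)$, and $X-X^*$ reduces to $DP-PD^*$, which vanishes by Lemma~\ref{lem:equivalence of Hermitian and positive}(i).

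For the converse, given an arbitrary Hermitian solution $X$, Theorem~\ref{thm:solution of AX=C} writes $X=D+(I-P)Z$ for some $Z\in\mathcal{L}(\mathscr{H})$. The equation $X=X^*$ becomes $D+(I-P)Z=D^*+Z^*(I-P)$; multiplying on the right by $P$ (and using $D^*P=D^*$ together with $(I-P)P=0$) yields $(I-P)ZP=D^*-DP=D^*-PD^*=(I-P)D^*$. Decomposing $(I-P)Z=(I-P)ZP+(I-P)Z(I-P)$ then gives $X=D+(I-P)D^*+(I-P)Z(I-P)$. Since the first two summands already form a Hermitian operator (again by $DP=PD^*$), the remainder $(I-P)Z(I-P)$ is automatically Hermitian, and taking $Y:=(I-P)Z(I-P)\in\mathcal{L}(\mathscr{H})_{sa}$ (which satisfies $(I-P)Y(I-P)=Y$ by idempotency of $I-P$) puts $X$ into the required form.

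The main obstacle I anticipate is the algebraic manipulation in the last paragraph: one has to choose the right side to multiply by in order to isolate the off-diagonal block $(I-P)ZP$, and the computation hinges delicately on the Hermitian-ness of $DP$ supplied by Lemma~\ref{lem:equivalence of Hermitian and positive}(i). Everything else (the forward verification, and the decomposition into $P$- and $(I-P)$-blocks) is bookkeeping with the two-sided relations $PD=D$ and $A(I-P)=0$.
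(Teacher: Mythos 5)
Your proof is correct and follows essentially the same route as the paper: both use Theorem~\ref{thm:solution of AX=C} to write a Hermitian solution as $D+(I-U_A^*U_A)Z$, the relations $PD=D$ and $A(I-P)=0$, and Lemma~\ref{lem:equivalence of Hermitian and positive}(i) to get $DP=PD^*$. The only cosmetic difference is that the paper first compresses the identity $X=X^*$ by $I-P$ on both sides to show the corner term is Hermitian and then solves for $(I-P)Y_0$, whereas you multiply on the right by $P$ to isolate $(I-P)ZP$ and let the Hermitian-ness of the remainder follow automatically; this is equivalent bookkeeping.
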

\begin{proof}Suppose that $X_0\in \mathcal{L}(\mathscr{H})_{sa}$ is a solution of \eqref{equ:Hermitian equation of AX=C}. Then by
Theorem~\ref{thm:solution of AX=C} we have $\mathcal{R}(C)\subseteq\mathcal{R}(A)$ and
\begin{equation}\label{equ:form of particalar solution to AX=C}X_0=D+(I-U_A^*U_A)Y_0\ \mbox{for some $Y_0\in\mathcal{L}(\mathscr{H})$},\end{equation}
which leads by $X_0^*=X_0$ to \begin{equation}\label{equ:expression of D-D star}D+(I-U_A^*U_A)Y_0=D^*+Y_0^*(I-U_A^*U_A).\end{equation}
Moreover, from \eqref{equ:property of the reduced solution+1} we have
$$(I-U_A^*U_A)(D-D^*)(I-U_A^*U_A)=0.$$
This together with \eqref{equ:expression of D-D star} yields $Z_0=Z_0^*$, where
\begin{equation}\label{equ:expression of Z 0}Z_0=(I-U_A^*U_A)Y_0^*(I-U_A^*U_A).\end{equation}
In view of \eqref{equ:property of the reduced solution+1}, \eqref{equ:expression of D-D star}, and \eqref{equ:expression of Z 0}, we have
\begin{align*}(I-U_A^*U_A)Y_0&=(I-U_A^*U_A)\big[D+(I-U_A^*U_A)Y_0\big]\\
&=(I-U_A^*U_A)\big[D^*+Y_0^*(I-U_A^*U_A)\big]\\
&=(I-U_A^*U_A)D^*+Z_0.
\end{align*}
Substituting the above into \eqref{equ:form of particalar solution to AX=C} yields
\begin{equation}\label{equ:form of particalar solution to AX=C+1}X_0=D+(I-U_A^*U_A)D^*+Z_0,\end{equation}
therefore $U_A^*U_AD^*=(D+D^*)+Z_0-X_0$, whence $U_A^*U_AD^*=DU_A^*U_A$ since both $X_0$ and $Z_0$ are Hermitian. Furthermore, it is clear from \eqref{equ:expression of Z 0}
that $$Z_0=(I-U_A^*U_A)Z_0(I-U_A^*U_A),$$ which indicates by \eqref{equ:form of particalar solution to AX=C+1} that
$X_0$ has the form \eqref{equ:form of Hermitian solutions to AX=C}.

Conversely, assume that \eqref{equ:Hermitian solvable conditions for AX=C} is fulfilled. Then by Lemma~\ref{lem:equivalence of Hermitian and positive} (i) $DU_A^*U_A$ is Hermitian, and it is easy to verify that any $X$ of the form \eqref{equ:form of Hermitian solutions to AX=C} is a solution of system \eqref{equ:Hermitian equation of AX=C}.
\end{proof}

\begin{remark} Let $A,C\in\mathcal{L}(\mathscr{H},\mathscr{K})$ be such that $A$ is regular and \eqref{equ:Hermitian solvable conditions for AX=C} is satisfied.
Unlike the assertion given in \cite[Theorem~1.2]{Fang-Yu-Yao}, the reduced solution $A^\dag C$ of \eqref{equ:equation of AX=C} may fail to be Hermitian. An interpretation can be given by using block matrices as follows:

Evidently, the operators $A,A^\dag$ and $C$ can be partitioned in the following way:
\begin{eqnarray*}A&=&\left.
 \begin{array}{c}
 \mathcal{R}(A) \\
 \mathcal{N}(A^*)\\
 \end{array}
 \right.\left(
 \begin{array}{cc}
 A_{11} & 0 \\
 0 & 0 \\
 \end{array}
 \right)\left.
 \begin{array}{c}
 \mathcal{R}(A^*)\\
 \mathcal{N}(A) \\
 \end{array},
 \right.\ \mbox{where $A_{11}$ is invertible},\\
A^\dag&=&\left.
 \begin{array}{c}
 \mathcal{R}(A^*) \\
 \mathcal{N}(A)\\
 \end{array}
 \right.\left(
 \begin{array}{cc}
 A_{11}^{-1} & 0 \\
 0 & 0 \\
 \end{array}
 \right)\left.
 \begin{array}{c}
 \mathcal{R}(A)\\
 \mathcal{N}(A^*) \\
 \end{array},
 \right.\\
C&=&\left.
 \begin{array}{c}
 \mathcal{R}(A) \\
 \mathcal{N}(A^*)\\
 \end{array}
 \right.\left(
 \begin{array}{cc}
 C_{11} & C_{12} \\
 C_{21} & C_{22} \\
 \end{array}
 \right)\left.
 \begin{array}{c}
 \mathcal{R}(A^*)\\
 \mathcal{N}(A) \\
 \end{array}.
 \right.\\
\end{eqnarray*}
Conditions of $AA^\dag C=C$ and $CA^*=(CA^*)^*$ can then be rephrased as
\begin{equation*}C_{21}=0, C_{22}=0\ \mbox{and}\ C_{11}A_{11}^*=A_{11}C_{11}^*,\end{equation*}
which gives the partitioned form of $A^\dag C$ as
$$A^\dag C=\left.
 \begin{array}{c}
 \mathcal{R}(A^*) \\
 \mathcal{N}(A)\\
 \end{array}
 \right.\left(
 \begin{array}{cc}
 A_{11}^{-1}C_{11} & A_{11}^{-1}C_{12} \\
 0 & 0 \\
 \end{array}
 \right)\left.
 \begin{array}{c}
 \mathcal{R}(A^*)\\
 \mathcal{N}(A) \\
 \end{array}.
 \right.$$
Clearly, $A^\dag C$ is Hermitian if and only if $C_{12}=0$.

In view of the observation above, a concrete counterexample to \cite[Theorem~1.2]{Fang-Yu-Yao} can be constructed as follows:
\begin{example}\label{equ:counterexample-1} Let $A=\left(
 \begin{array}{cc}
 1 & 0 \\
 0 & 0 \\
 \end{array}
 \right)$, $C=\left(
 \begin{array}{cc}
 2 & 1 \\
 0 & 0 \\
 \end{array}
 \right)$, $X=\left(
 \begin{array}{cc}
 2 & 1 \\
 1 & 1 \\
 \end{array}
 \right)$, $Y=\left(
 \begin{array}{cc}
 1 & 1 \\
 1 & 0 \\
 \end{array}
 \right)$. Then $AX=C$ and $X=Y^*Y\ge 0$, whereas $A^\dag C\ne (A^\dag C)^*$.
\end{example}
\end{remark}

To study the positive solutions of equation \eqref{equ:equation of AX=C}, we need the following lemma.
\begin{lemma}\label{lem:2 by 2 positive operator matrix}{\rm \cite[Corollary 3.5]{Xu-Sheng}} Let $A=\left( \begin{array}{cc}
 A_{11} & A_{12}\\
 A_{12}^* & A_{22}\\
 \end{array} \right)\in\mathcal{L}(\mathscr{H}\oplus \mathscr{K})$ be Hermitian, where $A_{11}\in \mathcal{L}(\mathscr{H})$ is regular. Then $A\ge 0$ if and only if
\begin{enumerate}
\item[{\rm (i)}] $A_{11}\geq 0$ ;
\item[{\rm (ii)}] $A_{12}= A_{11} A_{11}^\dag A_{12}$;
\item[{\rm (iii)}] $A_{22}-A_{12}^*A_{11}^\dag A_{12}\geq 0$.
\end{enumerate}
\end{lemma}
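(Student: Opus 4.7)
The plan is to establish the equivalence through the classical Schur-complement factorization
$$A = L^* D L,\qquad L = \begin{pmatrix} I & A_{11}^\dag A_{12} \\ 0 & I \end{pmatrix},\quad D = \begin{pmatrix} A_{11} & 0 \\ 0 & A_{22} - A_{12}^* A_{11}^\dag A_{12} \end{pmatrix}.$$
For the sufficiency direction $(\text{i})\wedge(\text{ii})\wedge(\text{iii})\Rightarrow A\ge 0$, I would first verify this identity by a direct block-matrix multiplication; the off-diagonal entries of $L^* D L$ collapse to $A_{11}(A_{11}^\dag A_{12})=A_{12}$ thanks to (ii), while the $(2,2)$ entry simplifies using $A_{11}^\dag A_{11} A_{11}^\dag = A_{11}^\dag$ together with (ii) and its adjoint. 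Then $A_{11}\ge 0$ and the Schur-complement condition (iii) give $D\ge 0$, whence $A = L^* D L \ge 0$.

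For the necessity, (i) follows immediately by restricting $\langle A\cdot,\cdot\rangle$ to vectors $(x,0)$. The heart of the argument is (ii). Setting $B = A^{1/2}$ and letting $V_1,V_2$ denote the canonical embeddings of $\mathscr H,\mathscr K$ into $\mathscr H\oplus\mathscr K$, I define $T_i := BV_i$. A short calculation yields $A_{ij} = T_i^* T_j$, so that
$$A_{12}A_{12}^* \;=\; T_1^* T_2 T_2^* T_1 \;\le\; \|T_2\|^2\, T_1^* T_1 \;=\; \|A_{22}\|\, A_{11}.$$
Since $A_{11}$ is positive and regular, so is $A_{11}^{1/2}$, and by functional calculus $\mathcal R(A_{11}^{1/2}) = \mathcal R(A_{11})$. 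Let $E = I - A_{11}^{1/2}(A_{11}^{1/2})^\dag$, which is the projection onto $\mathcal R(A_{11}^{1/2})^\perp$; then $EA_{11}^{1/2} = 0$ and hence $EA_{11}E = 0$. Multiplying the displayed inequality by $E$ on both sides yields $(EA_{12})(EA_{12})^* \le 0$, forcing $EA_{12}=0$, i.e.\ $A_{11}A_{11}^\dag A_{12} = A_{12}$, which is (ii). With (i) and (ii) in hand, the sufficiency factorization applies and gives $A = L^* D L$; since $L$ is invertible with $L^{-1} = \begin{pmatrix} I & -A_{11}^\dag A_{12} \\ 0 & I \end{pmatrix}\in\mathcal L(\mathscr H\oplus\mathscr K)$, I conclude $D = (L^{-1})^* A L^{-1} \ge 0$, and its $(2,2)$ entry is exactly condition (iii).

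The main obstacle is step (ii). In the Hilbert-space case one simply applies the scalar Cauchy--Schwarz (discriminant) bound to the quadratic $t\mapsto\langle A(x,ty),(x,ty)\rangle$, but in the $C^*$-module setting the coefficients of this quadratic take values in $\mathfrak A$, so that trick is unavailable. Moreover, as the paper emphasizes right after Theorem 1.3, the $C^*$-module Douglas theorem does not in general upgrade the majorization $A_{12}A_{12}^* \le c\, A_{11}$ to the range inclusion $\mathcal R(A_{12}) \subseteq \mathcal R(A_{11})$. The regularity hypothesis on $A_{11}$ is precisely what rescues the argument, by providing a genuine projection $A_{11}^{1/2}(A_{11}^{1/2})^\dag = P_{\mathcal R(A_{11})}$ to work with.
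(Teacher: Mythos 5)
This lemma is imported into the paper by citation to \cite[Corollary 3.5]{Xu-Sheng}; the authors give no proof of it, so there is no in-paper argument to compare yours against. Judged on its own, your proof is correct and self-contained, and it follows what is essentially the standard Schur-complement route for such results. The sufficiency direction via $A=L^*DL$ is routine once (ii) is used to collapse the off-diagonal blocks, and your necessity argument correctly isolates the only genuinely delicate point: upgrading the majorization $A_{12}A_{12}^*\le\|A_{22}\|\,A_{11}$ (obtained from $A_{ij}=V_i^*A^{1/2}\cdot A^{1/2}V_j$ and $T_2T_2^*\le\|T_2\|^2 I$) to the range condition (ii), which, as you note, is exactly where the regularity of $A_{11}$ is indispensable in the Hilbert $C^*$-module setting. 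One small simplification: the detour through $A_{11}^{1/2}$ and the identity $\mathcal R(A_{11}^{1/2})=\mathcal R(A_{11})$ is unnecessary, since with $E=I-A_{11}A_{11}^\dag$ one already has $EA_{11}=A_{11}-A_{11}A_{11}^\dag A_{11}=0$, hence $EA_{11}E=0$, and then $(EA_{12})(EA_{12})^*\le\|A_{22}\|\,EA_{11}E=0$ gives $EA_{12}=0$ directly. Also, for completeness one should record at the outset that $A$ Hermitian forces $A_{11}=A_{11}^*$, so that $A_{11}^\dag$ is Hermitian and $A_{11}A_{11}^\dag=A_{11}^\dag A_{11}=P_{\mathcal R(A_{11})}$; you use these identities implicitly when computing $L^*DL$ and when passing from $EA_{12}=0$ to $A_{12}=A_{11}A_{11}^\dag A_{12}$.
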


Our technical result on the positive solutions of \eqref{equ:equation of AX=C} is as follows:
\begin{lemma}\label{lem:technique result of positive solution of AX=C} Let $A,C\in\mathcal{L}(\mathscr{H},\mathscr{K})$ be such that $A$ is semi-regular.
Then the system
\begin{equation}\label{equ:positive equation of AX=C}AX=C, X\in \mathcal{L}(\mathscr{H})_+\end{equation}
 has a solution if and only if
 \begin{equation}\label{equ:defn of lambda}\mathcal{R}(C)\subseteq \mathcal{R}(A), CA^*\in\mathcal{L}(\mathscr{H})_+\ \mbox{and}\ \lambda=\sup\big\{\Vert T_n\Vert: n\in\mathbb{N}\big\}<+\infty,\end{equation}
where
\begin{equation}\label{equ:defn of T n}T_n=(I-U_A^*U_A)D^*\Big[\frac{1}{n}I_{\mathscr{H}_1}+DU_A^*U_A|_{\mathscr{H}_1}\Big]^{-1}D(I-U_A^*U_A),\ \mbox{for each $n\in \mathbb{N}$},\end{equation}
in which $U_A\in\mathcal{L}(\mathscr{H},\mathscr{K})$ is the partial isometry satisfying \eqref{equ:equations associated the polar decomposition}, $\mathscr{H}_1=U_A^*U_A \mathscr{H}$ and $D\in\mathcal{L}(\mathscr{H})$ is the reduced solution of \eqref{equ:equation of AX=C} with $\mathscr{L}=\mathscr{H}$ therein.
If \eqref{equ:defn of lambda} is fulfilled, then the general solution of \eqref{equ:positive equation of AX=C} has the form \eqref{equ:form of Hermitian solutions to AX=C} with $Y\in\mathcal{L}(\mathscr{H})_+$ therein such that
\begin{equation}\label{equ:general positive solution-2}(I-U_A^*U_A)Y(I-U_A^*U_A)\ge T_n\ \mbox{for all $n \in \mathbb{N}$}.\end{equation}
\end{lemma}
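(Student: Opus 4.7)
The plan is to combine Theorem~\ref{thm:Hermitian solution of AX=C} with a careful block-matrix analysis adapted to the projection $P:=U_A^*U_A$. Since any positive solution of \eqref{equ:positive equation of AX=C} is in particular a Hermitian solution of \eqref{equ:Hermitian equation of AX=C}, Theorem~\ref{thm:Hermitian solution of AX=C} already forces $\mathcal{R}(C)\subseteq\mathcal{R}(A)$ together with $CA^*$ Hermitian, and if additionally $X\ge 0$ then $CA^*=AXA^*\ge 0$. Writing $Q:=I-P$, the same theorem yields the general form $X=D+QD^*+QYQ$, so the whole question becomes to pin down which Hermitian $Y$ (equivalently, which compression $QYQ$) make $X$ positive.

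I would then pass to the block decomposition $\mathscr{H}=\mathscr{H}_1\oplus\mathscr{H}_2$ with $\mathscr{H}_2=Q\mathscr{H}$. The identity $PD=D$ from \eqref{equ:property of the reduced solution+1} puts $D$ in the shape $\begin{pmatrix}D_{11}&D_{12}\\0&0\end{pmatrix}$ with $D_{11}=DP|_{\mathscr{H}_1}\ge 0$ (by Lemma~\ref{lem:equivalence of Hermitian and positive}(ii) together with $CA^*\ge 0$), so that $X=\begin{pmatrix}D_{11}&D_{12}\\D_{12}^*&Z\end{pmatrix}$ where $Z$ is the $(2,2)$-block of $QYQ$. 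With $E_n:=[\frac{1}{n}I_{\mathscr{H}_1}+D_{11}]^{-1}$, which is well defined and satisfies the algebraic identity $D_{11}E_n=I_{\mathscr{H}_1}-\frac{1}{n}E_n$, a direct block computation gives $T_n=0\oplus D_{12}^*E_nD_{12}$; hence the desired relation $QYQ\ge T_n$ is exactly the inequality $Z\ge D_{12}^*E_nD_{12}$ in $\mathcal{L}(\mathscr{H}_2)$.

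For necessity, given $X\ge 0$, I would test positivity on the vector $\xi=(-E_nD_{12}y)\oplus y$ for arbitrary $y\in\mathscr{H}_2$; expanding $\langle X\xi,\xi\rangle\ge 0$ and using $D_{11}E_n=I-\frac{1}{n}E_n$ to cancel the cross-terms yields
\begin{equation*}
\langle Zy,y\rangle\ \ge\ \langle D_{12}^*E_nD_{12}y,y\rangle+\frac{1}{n}\langle D_{12}^*E_n^2D_{12}y,y\rangle\ \ge\ \langle D_{12}^*E_nD_{12}y,y\rangle,
\end{equation*}
so $Z\ge D_{12}^*E_nD_{12}$, i.e., $QYQ\ge T_n$, for every $n$. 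The chain $0\le T_n\le QYQ$ then forces $\lambda=\sup_n\|T_n\|\le\|QYQ\|<\infty$. For sufficiency, assuming \eqref{equ:defn of lambda}, I would exhibit a concrete positive solution by taking $Y:=\lambda Q$: one has $QYQ=\lambda Q\ge T_n$ since $\|T_n\|\le\lambda$ and $T_n=QT_nQ\ge 0$; the $(1,1)$-block of $X+\frac{1}{n}I$ is invertible, so Lemma~\ref{lem:2 by 2 positive operator matrix} reduces its positivity to the Schur-complement inequality $(\lambda+\frac{1}{n})I_{\mathscr{H}_2}\ge D_{12}^*E_nD_{12}$, which holds by the norm bound, and then $X=\lim_n(X+\frac{1}{n}I)\ge 0$. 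Running the same Schur-complement argument in reverse, starting from any $Y\in\mathcal{L}(\mathscr{H})_+$ with $QYQ\ge T_n$, delivers the full parametrization of positive solutions.

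The step I expect to be the main obstacle is establishing the \emph{sharp} inequality $Z\ge D_{12}^*E_nD_{12}$ with no $\frac{1}{n}$ slack. Applying Lemma~\ref{lem:2 by 2 positive operator matrix} directly to $X+\frac{1}{n}I$ only yields $Z\ge D_{12}^*E_nD_{12}-\frac{1}{n}I_{\mathscr{H}_2}$, which is enough to conclude $\lambda<\infty$ but is too weak for the characterization $QYQ\ge T_n$ for each $n$. The specific test vector chosen above, tailored to the identity $D_{11}E_n=I-\frac{1}{n}E_n$, is precisely what removes this slack and makes the parametrization of all positive solutions go through.
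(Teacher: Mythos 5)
Your proof is correct, and its overall architecture is the same as the paper's: reduce to the Hermitian parametrization of Theorem~\ref{thm:Hermitian solution of AX=C}, decompose with respect to $P=U_A^*U_A$, apply Lemma~\ref{lem:2 by 2 positive operator matrix} to a regularized operator whose $(1,1)$-block is invertible, take $Y=\lambda(I-P)$ for sufficiency, and pass to the limit. The one place you diverge is exactly the step you flag as the main obstacle, and here the paper has a slicker device: instead of perturbing to $X+\frac{1}{n}I$, it perturbs to $X_n=X+\frac{1}{n}P$, which regularizes only the $(1,1)$-corner and leaves the $(2,2)$-block $(I-P)X(I-P)$ untouched; condition (iii) of Lemma~\ref{lem:2 by 2 positive operator matrix} applied to $X_n\ge 0$ then yields the slack-free inequality $(I-P)X(I-P)\ge T_n$ immediately, with no need for your tailored test vector $\xi=(-E_nD_{12}y)\oplus y$. (The equivalence ``$X\ge 0$ iff $X+\frac{1}{n}P\ge 0$ for all $n$'' also runs the characterization of the general solution in both directions at once.) Your test-vector computation is a valid, more hands-on substitute --- it even gives the slightly stronger bound $Z\ge D_{12}^*E_nD_{12}+\frac{1}{n}D_{12}^*E_n^2D_{12}$ --- but it costs you an extra calculation that the choice of perturbation $\frac{1}{n}P$ renders unnecessary. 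Everything else (the identification $T_n=0\oplus D_{12}^*E_nD_{12}\ge 0$, the bound $\lambda\le\|(I-P)X(I-P)\|$, the verification that $D+(I-P)D^*+\lambda(I-P)$ is a positive solution) matches the paper's argument.
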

\begin{proof} For simplicity, we put
\begin{equation}\label{equ:defn of P and H1}P=U_A^*U_A\ \mbox{and thus}\ \ \mathscr{H}_1=P\mathscr{H}.\end{equation} Suppose that $X\in\mathcal{L}(\mathscr{H})_+$ is a solution of system \eqref{equ:positive equation of AX=C}. Then \eqref{equ:Hermitian solvable conditions for AX=C} is fulfilled and $X$ has form \eqref{equ:form of Hermitian solutions to AX=C}. Therefore, by \eqref{equ:property of the reduced solution+1} and \eqref{equ:form of Hermitian solutions to AX=C},
$$DP=PXP\ge 0\ \mbox{and thus}\ CA^*\ge 0\ \mbox{by Lemma~\ref{lem:equivalence of Hermitian and positive} (ii)}.$$
For each $n\in\mathbb{N}$, let
$X_n=X+\frac{1}{n}P\in\mathcal{L}(\mathscr{H})_+$. Then from \eqref{equ:property of the reduced solution+1}, we get

\begin{equation*}X_n=\left.
 \begin{array}{c}
 \mathscr{H}_1 \\
 \mathscr{H}_1^\bot \\
 \end{array}
 \right.\left(
 \begin{array}{cc}
 \frac{1}{n}I_{\mathscr{H}_1}+DP|_{\mathscr{H}_1} & D(I-P)|_{\mathscr{H}_1^\bot} \\
 (I-P)D^*|_{\mathscr{H}_1} & (I-P)X(I-P)|_{\mathscr{H}_1^\bot}\\
 \end{array}
 \right)\left.
 \begin{array}{c}
 \mathscr{H}_1 \\
 \mathscr{H}_1^\bot \\
 \end{array}
 \right..
\end{equation*}
A direct application of Lemma~\ref{lem:2 by 2 positive operator matrix} to the operator $X_n$ above yields
\begin{equation*}0\le T_n\le (I-P)X(I-P)\ \mbox{for all $n\in\mathbb{N}$}.
\end{equation*}
hence
\begin{equation*}\lambda=\sup\big\{\Vert T_n\Vert: n\in\mathbb{N}\big\}\le \Vert (I-P)X(I-P) \Vert<+\infty.
\end{equation*}

Conversely, suppose that \eqref{equ:defn of lambda} is fulfilled. Then by Lemma~\ref{lem:equivalence of Hermitian and positive}~(ii) $DP$ is positive.
For each $n\in\mathbb{N}$, let
\begin{equation*}Z_n=\frac{1}{n}P+D+(I-P)D^*+\lambda (I-P).\end{equation*}
Then $Z_n$ is positive by Lemma~\ref{lem:2 by 2 positive operator matrix}, since it has the partitioned form
\begin{equation*}Z_n=\left.
 \begin{array}{c}
 \mathscr{H}_1 \\
 \mathscr{H}_1^\bot \\
 \end{array}
 \right.\left(
 \begin{array}{cc}
 \frac{1}{n}I_{\mathscr{H}_1}+DP|_{\mathscr{H}_1} & D(I-P)|_{\mathscr{H}_1^\bot} \\
 (I-P)D^*|_{\mathscr{H}_1} & \lambda I_{\mathscr{H}_1^\bot}\\
 \end{array}
 \right)\left.
 \begin{array}{c}
 \mathscr{H}_1 \\
 \mathscr{H}_1^\bot \\
 \end{array}
 \right.
\end{equation*}
and
\begin{equation*}\lambda (I-P)-T_n\ge \lambda (I-P)-\Vert T_n\Vert (I-P)=(\lambda-\Vert T_n\Vert)(I-P)\ge 0.\end{equation*}

Let $$X=\lim\limits_{n\to\infty}Z_n=D+(I-P)D^*+\lambda (I-P).$$ Then $X$ is positive and $AX=C$.

Finally, suppose that \eqref{equ:defn of lambda} is satisfied. Given any $X\in\mathcal{L}(\mathscr{H})_{sa}$ of form~\eqref{equ:form of Hermitian solutions to AX=C},
it is clear that $X\ge 0$ if and only if $X+\frac{1}{n}P\ge 0$ for any $n\in\mathbb{N}$. Based on such an observation and the direct application of Lemma~\ref{lem:2 by 2 positive operator matrix}, the asserted form of the general solution of \eqref{equ:positive equation of AX=C} follows.
\end{proof}

\begin{remark} In the preceding lemma, there is no
regularity or semi-regularity assumption on $DU_A^*U_A$. It is interesting to determine conditions under which the number $\lambda$ defined by \eqref{equ:defn of lambda} is finite. With the notations and the conditions of Lemma~\ref{lem:technique result of positive solution of AX=C} (except for $\lambda<+\infty$), for each $n\in\mathbb{N}$ let
\begin{equation}\label{equ:defn of S n}S_n=\frac{1}{n}I_{\mathscr{H}_1}+DU_A^*U_A|_{\mathscr{H}_1}\in\mathcal{L}(\mathscr{H}_1).\end{equation}
Note that from the assumption and Lemma~\ref{lem:equivalence of Hermitian and positive} we conclude that $DU_A^*U_A$ is positive, hence $U_A^*U_AD^*=DU_A^*U_A$.
Then
$$T_n=(I-U_A^*U_A)D^*S_n^{-\frac12} \left((I-U_A^*U_A)D^*S_n^{-\frac12}\right)^*,$$ hence
\begin{eqnarray*}\Vert T_n\Vert=\Big\Vert S_n^{-\frac12} D(I-U_A^*U_A)D^*S_n^{-\frac12}\Big\Vert=\Vert U_n+V_n\Vert,
\end{eqnarray*}
where
\begin{equation}\label{equ:defn of U n and V n}U_n=S_n^{-\frac12}(DU_A^*U_A)^2S_n^{-\frac12}\ \mbox{and}\ V_n=S_n^{-\frac12}DD^*S_n^{-\frac12}
\end{equation}
are such that $\Vert U_n\Vert\le \Vert DU_A^*U_A\Vert$ for all $n\in\mathbb{N}$, and $\Vert V_n\Vert=\big\Vert D^*S_n^{-1}D\big\Vert$ for each $n\in\mathbb{N}$.
Therefore,
\begin{align}\label{eqn:defn of mu}\lambda<+\infty&\Longleftrightarrow \sup\left\{\Vert V_n\Vert: n\in\mathbb{N}\right\}<+\infty\\
\label{eqn:defn of mu+1}&\Longleftrightarrow\sup\left\{\big\Vert D^*S_n^{-1}D\big\Vert: n\in\mathbb{N}\right\}<+\infty.\end{align}
\end{remark}

Based on the observation above, an application of Lemma~\ref{lem:technique result of positive solution of AX=C} is as follows:
\begin{theorem}\label{thm:technique result of positive solution of AX=C}{\rm (cf.\,\cite[Theorem~3.1 (iii)]{Liang-Deng})} Let $A,C\in\mathcal{L}(\mathscr{H},\mathscr{K})$ be such that $A$ is semi-regular.
Then system \eqref{equ:positive equation of AX=C}
 has a solution if and only if
 \begin{equation}\label{equ:defn of lambda-new}\mathcal{R}(C)\subseteq \mathcal{R}(A)\ \mbox{and}\ CC^*\le t\, CA^*\ \mbox{for some $t>0$}.\end{equation}
\end{theorem}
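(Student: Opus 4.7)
The plan is to deduce the theorem from Lemma~\ref{lem:technique result of positive solution of AX=C} combined with the equivalence \eqref{eqn:defn of mu+1} in the remark after it, by converting the hypothesis $CC^*\le t\,CA^*$ into the finiteness of $\lambda$. The necessity is easy: for any $X\in\mathcal{L}(\mathscr{H})_+$ with $AX=C$ one reads off $\mathcal{R}(C)\subseteq\mathcal{R}(A)$, while $CC^*=AX^2A^*$ and $CA^*=AXA^*$, together with the elementary inequality $X^2\le\|X\|X$ (valid for any positive $X$), yield $CC^*\le\|X\|\,CA^*$, so $t=\|X\|$ works.

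For sufficiency, assume \eqref{equ:defn of lambda-new}. First I verify $CA^*\in\mathcal{L}(\mathscr{K})_+$: since $tCA^*-CC^*\ge 0$ is self-adjoint and $CC^*$ is self-adjoint, $CA^*$ is self-adjoint; then $CA^*=\tfrac{1}{t}(tCA^*-CC^*)+\tfrac{1}{t}CC^*\ge 0$. Lemma~\ref{lem:equivalence of Hermitian and positive}(ii) now gives $DP\ge 0$ where $P=U_A^*U_A$, so all hypotheses of Lemma~\ref{lem:technique result of positive solution of AX=C} apart from $\lambda<+\infty$ are in force; by the subsequent remark, what remains is the uniform bound $\sup_n\|D^*S_n^{-1}D\|<+\infty$. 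My key intermediate claim is the operator inequality
\begin{equation*}
DD^*\le t\,DP\quad\text{on}\ \mathscr{H}.
\end{equation*}
Once this is in hand, the inequality $0\le DP|_{\mathscr{H}_1}\le S_n$ on $\mathscr{H}_1$ yields $\|S_n^{-1/2}DD^*S_n^{-1/2}\|\le t$, hence $\|D^*S_n^{-1}D\|=\|V_n\|\le t$ uniformly in $n$.

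To prove the claim, I write $C=AD$ so that $CC^*\le t\,CA^*$ becomes $ADD^*A^*\le t\,ADA^*$ in $\mathcal{L}(\mathscr{K})$. Evaluating both sides at $z\in\mathscr{K}$ and setting $y:=A^*z\in\mathcal{R}(A^*)\subseteq\mathscr{H}_1$ produces the $\mathfrak{A}$-valued inequality $\langle DD^*y,y\rangle\le t\,\langle Dy,y\rangle$, and since $Py=y$ one has $\langle Dy,y\rangle=\langle DPy,y\rangle$ on the right. Both sides are norm-continuous functions of $y\in\mathscr{H}$ into the positive cone of $\mathfrak{A}$, so the inequality extends by density of $\mathcal{R}(A^*)$ in $\mathscr{H}_1=\overline{\mathcal{R}(A^*)}$ to every $y\in\mathscr{H}_1$. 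Finally $PD=D$ and $D^*P=D^*$, together with $DP|_{\mathscr{H}_1^\perp}=0$, show that both $DD^*$ and $DP$ are supported on $\mathscr{H}_1$, so the inequality on $\mathscr{H}_1$ upgrades to $DD^*\le t\,DP$ on all of $\mathscr{H}$. I expect the only delicate point to be this density/continuity extension in the $\mathfrak{A}$-valued inner-product setting, but since the inner product is jointly norm-continuous the passage to the closure is routine.
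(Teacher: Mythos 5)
Your proposal is correct and follows essentially the same route as the paper's proof: necessity via $CC^*=AX^2A^*\le\|X\|\,AXA^*$, and sufficiency by deriving $DD^*\le t\,DP$ from $CC^*\le t\,CA^*$ (via density of $\mathcal{R}(A^*)$ in $\mathscr{H}_1$), bounding $\|V_n\|\le t$, and invoking Lemma~\ref{lem:technique result of positive solution of AX=C} through the equivalence \eqref{eqn:defn of mu}--\eqref{eqn:defn of mu+1}. The only difference is that you make explicit two points the paper leaves implicit (the positivity of $CA^*$ and the continuity/density extension), which is a welcome clarification rather than a deviation.
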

\begin{proof}Suppose that $X\in\mathcal{L}(\mathscr{H})_+$ is such that $AX=C$. Then $\mathcal{R}(C)\subseteq \mathcal{R}(A)$ and
$$CC^*=AX^2A^*\le \Vert X\Vert\, AXA^*=\Vert X\Vert\,CA^*.$$
Therefore, \eqref{equ:defn of lambda-new} is satisfied for any $t\ge \Vert X\Vert$.

Conversely, suppose that \eqref{equ:defn of lambda-new} is satisfied. Let $P$, $\mathscr{H}_1$, $S_n$ and $V_n$ be defined by \eqref{equ:defn of P and H1},
\eqref{equ:defn of S n}, and \eqref{equ:defn of U n and V n}, respectively. Then $DP$ is positive by Lemma~\ref{lem:equivalence of Hermitian and positive}~(ii), and from the latter condition in \eqref{equ:defn of lambda-new} we have
\begin{eqnarray*}\langle DD^* A^*x, A^*x\rangle=\langle ADD^* A^*x, x\rangle=\langle CC^*x, x\rangle\le t \langle CA^*x, x\rangle=t\langle D A^*x, A^*x\rangle
\end{eqnarray*}
for any $x\in \mathscr{K}$. Thus
$$\langle DD^*Pu, u\rangle=\langle DD^*Pu, Pu\rangle\le t\langle D Pu, Pu\rangle=t\langle DPu, u\rangle,$$
whence $DD^*P\le tDP$.
Accordingly,
$$\Vert V_n\Vert=\|S_n^{-\frac12}DD^*P S_n^{-\frac12}\|\le t\Big\Vert S_n^{-\frac12}\cdot DP \cdot S_n^{-\frac12}\Big\Vert\le t,\ \mbox{for any $n\in\mathbb{N}$}.$$
The conclusion then follows from \eqref{eqn:defn of mu} and Lemma~\ref{lem:technique result of positive solution of AX=C}.
\end{proof}

\begin{remark}Let $S_n$ be defined by \eqref{equ:defn of S n}, where $P=U_A^*U_A$ and $DP$ is positive.
Obviously, a sufficient condition for $\lambda<+\infty$ can be derived from \eqref{eqn:defn of mu+1} as
\begin{equation}\label{equ:defn of M}M=\sup\{\Vert S_n^{-1}\Vert: n\in\mathbb{N}\}<+\infty.\end{equation}
In this case, for any $n\in\mathbb{N}$ and $x\in \mathscr{H}_1$ we have
$$\Vert x\Vert \le \Vert S_n^{-1}\Vert\,\Vert S_n(x)\Vert \le M \Vert S_n(x)\Vert,$$
which leads to
$$\Vert DP x\Vert=\lim_{n\to\infty}\|S_n(x)\|\ge \frac{1}{M+1}\Vert x\Vert,\ \mbox{for any $x\in \mathscr{H}_1$}.$$
Therefore, $DP|_{\mathscr{H}_1}$ and furthermore $DP$
is regular, since $DP=PDP$.

\end{remark}
Our next result on the positive solutions of \eqref{equ:equation of AX=C} is as follows:
\begin{theorem}\label{thm:positive solution of AX=C-regular case} Let $A,C\in\mathcal{L}(\mathscr{H},\mathscr{K})$ be such that
$A$ is semi-regular and $CA^*$ is regular.
Then system \eqref{equ:positive equation of AX=C}
has a solution if and only if
\begin{equation}\label{equ:delete lambda}\mathcal{R}(C)\subseteq \mathcal{R}(A), CA^*\in\mathcal{L}(\mathscr{H})_+\ \mbox{and}\ \mathcal{R}(D)= \mathcal{R}(DP),\end{equation}
where $U_A\in\mathcal{L}(\mathscr{H},\mathscr{K})$ is the partial isometry satisfying \eqref{equ:equations associated the polar decomposition}, $P=U_A^*U_A$ and $D\in\mathcal{L}(\mathscr{H})$ is the reduced solution of \eqref{equ:equation of AX=C} with $\mathscr{L}=\mathscr{H}$ therein.
In such case, the general solution of \eqref{equ:positive equation of AX=C} has the form
\begin{equation}\label{equ:form of positive solutions to AX=C-regular case}X=X_0+PZP,\end{equation}
in which $Z\in\mathcal{L}(\mathscr{H})_+$ is arbitrary, and
\begin{equation}\label{equ:particular positive solution of X0-1}X_0=D+(I-P)D^*+(I-P)D^*(DP)^\dag D(I-P).\end{equation}
\end{theorem}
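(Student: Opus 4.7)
The plan is to combine Theorem~\ref{thm:Hermitian solution of AX=C}, Lemma~\ref{lem:equivalence of Hermitian and positive}, and a direct application of the block matrix criterion in Lemma~\ref{lem:2 by 2 positive operator matrix}. The key observation is that the regularity of $CA^*$ upgrades to regularity of $DP$ by Lemma~\ref{lem:equivalence of Hermitian and positive}~(iii), so the Moore-Penrose inverse $(DP)^\dag$ is available. This replaces the family of inequalities $(I-P)Y(I-P)\ge T_n$ used in Lemma~\ref{lem:technique result of positive solution of AX=C} by a single Schur-complement inequality and removes the limiting process.

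Setup: since $A$ is semi-regular, $\mathscr{H}_1=P\mathscr{H}$ is orthogonally complemented and $\mathscr{H}=\mathscr{H}_1\oplus\mathscr{H}_1^\bot$. Using $PD=D$ from \eqref{equ:property of the reduced solution+1}, every Hermitian solution of \eqref{equ:equation of AX=C} given by Theorem~\ref{thm:Hermitian solution of AX=C} admits the block decomposition
\[
X=\left(\begin{array}{cc} DP|_{\mathscr{H}_1} & D(I-P)|_{\mathscr{H}_1^\bot} \\ (I-P)D^*|_{\mathscr{H}_1} & (I-P)X(I-P)|_{\mathscr{H}_1^\bot} \end{array}\right),
\]
and likewise for $X_0$ with the $(2,2)$-entry replaced by $(I-P)D^*(DP)^\dag D(I-P)|_{\mathscr{H}_1^\bot}$. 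Under the hypothesis that $CA^*$ is Hermitian and regular, the $(1,1)$-entry $DP$ is positive (Lemma~\ref{lem:equivalence of Hermitian and positive}~(ii)) and regular (Lemma~\ref{lem:equivalence of Hermitian and positive}~(iii)), so Lemma~\ref{lem:2 by 2 positive operator matrix} applies directly to either block matrix.

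For the ``only if'' direction, a positive solution $X$ is Hermitian, so Theorem~\ref{thm:Hermitian solution of AX=C} yields $\mathcal{R}(C)\subseteq\mathcal{R}(A)$ together with $CA^*=(CA^*)^*$, and positivity of $CA^*$ follows from Lemma~\ref{lem:equivalence of Hermitian and positive}~(ii). Applying Lemma~\ref{lem:2 by 2 positive operator matrix}~(ii) to the block form of $X$ above forces $D(I-P)=(DP)(DP)^\dag D(I-P)$, i.e.\ $\mathcal{R}(D(I-P))\subseteq\mathcal{R}(DP)$. Combined with $\mathcal{R}(D)=\mathcal{R}(DP)+\mathcal{R}(D(I-P))$ and the trivial inclusion $\mathcal{R}(DP)\subseteq\mathcal{R}(D)$, this is exactly $\mathcal{R}(D)=\mathcal{R}(DP)$. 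For the ``if'' direction, assume \eqref{equ:delete lambda}. The range condition gives $D(I-P)=(DP)(DP)^\dag D(I-P)$, the Schur complement of $X_0$ is identically zero, so Lemma~\ref{lem:2 by 2 positive operator matrix} gives $X_0\ge 0$; and $AX_0=AD+A(I-P)[\cdots]=AD=C$ since $A(I-P)=0$.

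For the parametrization of general positive solutions, let $X\ge 0$ solve $AX=C$. Then $W:=X-X_0$ satisfies $AW=0$, so $\mathcal{R}(W)\subseteq\mathcal{N}(A)=(I-P)\mathscr{H}$, and Hermiticity gives $W=(I-P)W(I-P)$. Writing $X$ and $X_0$ in their $2\times 2$ block forms, they agree on the $(1,1)$ entry and the off-diagonal entries, so $W$ is concentrated in the $(2,2)$ block. By Lemma~\ref{lem:2 by 2 positive operator matrix}~(iii) applied to $X\ge 0$, the Schur complement satisfies $(I-P)X(I-P)\ge (I-P)D^*(DP)^\dag D(I-P)$, which rearranges to $W\ge 0$; this produces the parametrization with a free positive operator, matching the asserted form of the general solution.

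The main obstacle is the transition from the family of inequalities $(I-P)Y(I-P)\ge T_n$ in Lemma~\ref{lem:technique result of positive solution of AX=C} to the single Schur-complement inequality with $(DP)^\dag$. The regularity of $DP$ (obtained from Lemma~\ref{lem:equivalence of Hermitian and positive}~(iii)) is what makes $(DP)(DP)^\dag$ equal to the orthogonal projection onto $\mathcal{R}(DP)$, so that condition (ii) of Lemma~\ref{lem:2 by 2 positive operator matrix} is genuinely equivalent to the range identity $\mathcal{R}(D)=\mathcal{R}(DP)$; without regularity one would only have a limit of Schur complements, as in the remark following Lemma~\ref{lem:technique result of positive solution of AX=C}.
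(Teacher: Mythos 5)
Your proof is correct and follows essentially the same route as the paper's: obtain the block decomposition of a Hermitian solution via Theorem~\ref{thm:Hermitian solution of AX=C}, upgrade $DP$ to a positive regular operator via Lemma~\ref{lem:equivalence of Hermitian and positive}~(ii)--(iii), and apply Lemma~\ref{lem:2 by 2 positive operator matrix} to read off both the range condition and the Schur-complement parametrization. Note only that what you (and the paper's own proof) actually derive is $X=X_0+(I-P)Z(I-P)$ with $Z\ge 0$, so the displayed form $X=X_0+PZP$ in \eqref{equ:form of positive solutions to AX=C-regular case} is evidently a typo for $(I-P)Z(I-P)$; your $W=(I-P)W(I-P)$ is the correct free parameter.
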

\begin{proof} Let $\mathscr{H}_1$ be defined by \eqref{equ:defn of P and H1}. Suppose that $X\in\mathcal{L}(\mathscr{H})_+$ is a solution of system \eqref{equ:positive equation of AX=C}. Then the first two conditions in \eqref{equ:delete lambda} is satisfied by Lemma~\ref{lem:technique result of positive solution of AX=C}; $DP$ is positive and regular by Lemma~\ref{lem:equivalence of Hermitian and positive}~(ii) and (iii); and by Theorem~\ref{thm:Hermitian solution of AX=C} there exists $Y\in\mathcal{L}(\mathscr{H})_{sa}$ such that $X$ has form \eqref{equ:form of Hermitian solutions to AX=C}, which leads to
\begin{equation}\label{equ:partitioned form of Hermitian solution to AX=C}X=\left.
 \begin{array}{c}
 \mathscr{H}_1 \\
 \mathscr{H}_1^\bot \\
 \end{array}
 \right.\left(
 \begin{array}{cc}
 DP|_{\mathscr{H}_1} & D(I-P)|_{\mathscr{H}_1^\bot} \\
 (I-P)D^*|_{\mathscr{H}_1} & (I-P)Y(I-P)|_{\mathscr{H}_1^\bot}\\
 \end{array}
 \right)\left.
 \begin{array}{c}
 \mathscr{H}_1 \\
 \mathscr{H}_1^\bot \\
 \end{array}
 \right.
\end{equation}
In view of \eqref{equ:partitioned form of Hermitian solution to AX=C} and the regularity together with the positivity of $DP$, we conclude from Lemma~\ref{lem:2 by 2 positive operator matrix} that 
\begin{equation}\label{equ:concerning range of D}\mathcal{R}\big(D(I-P)|_{\mathscr{H}_1^\bot}\big)\subseteq \mathcal{R}\big(DP|_{\mathscr{H}_1}\big)\end{equation} and
\begin{equation}\label{equ:defn of positive Z}Z\stackrel{def}{=}(I-P)Y(I-P)-(I-P)D^*(DP)^\dag D(I-P)\ge 0.\end{equation}
Formula~\eqref{equ:form of positive solutions to AX=C-regular case} for $X$ then follows from \eqref{equ:form of Hermitian solutions to AX=C}, \eqref{equ:particular positive solution of X0-1}, and \eqref{equ:defn of positive Z}, since it is obvious that $(I-P)Z(I-P)=Z$.
Furthermore, it is clear that \eqref{equ:concerning range of D} is satisfied if and only if 
$\mathcal{R}\big(D(I-P)\big)\subseteq \mathcal{R}(DP)$, which can obviously be rephrased as $\mathcal{R}(D)=\mathcal{R}(DP)$.

The discussion above indicates that when \eqref{equ:delete lambda} is satisfied, any $X\in \mathcal{L}(\mathscr{H})_+$ is a solution of system \eqref{equ:positive equation of AX=C}
if and only if it has the form \eqref{equ:form of positive solutions to AX=C-regular case}.
\end{proof}

\begin{remark} Let $A,C\in\mathcal{L}(\mathscr{H},\mathscr{K})$ be such that
$A$ is semi-regular, $\mathcal{R}(C)\subseteq \mathcal{R}(A)$ and $DU_A^*U_A$ is regular. Then from the proof of Theorem~\ref{thm:positive solution of AX=C-regular case} we can conclude that system \eqref{equ:positive equation of AX=C} has a solution if and only if $DU_A^*U_A$ is positive and 
$\mathcal{R}(D)=\mathcal{R}(DU_A^*U_A)$. In such case, the general solution of \eqref{equ:positive equation of AX=C} also has form
\eqref{equ:form of positive solutions to AX=C-regular case}.

It is noticeable that $CA^*$ may be non-regular even if $DU_A^*U_A$ is positive and regular. For example, let $A$ be semi-regular and meanwhile be non-regular, and put $C=A$. Then clearly, $U_A^*U_A$ is the reduced solution of $AX=A$. It is known that $A$ is regular if and only if $AA^*$ is regular (see \cite[Theorem 3.2]{Lance} and \cite[Remark~1.1]{Xu-Sheng}), so in this case $CA^*$ fails to be regular.
\end{remark}

\begin{remark}\label{rem:Zhangs observation} The conclusion stated in Theorem~\ref{thm:positive solution of AX=C-regular case} may be false if the last condition in \eqref{equ:delete lambda} is not fulfilled. For example, let $\mathfrak{A}=\mathbb{C}, \mathscr{H}=\mathscr{K}=\mathbb{C}^3$ and Put
$$A=\left(
 \begin{array}{ccc}
 1 & 0 & 0 \\
 0 & 1 & 0 \\
 0 & 0 & 0 \\
 \end{array}
 \right), C=\left(
 \begin{array}{ccc}
 1 & 0 & 0 \\
 0 & 0 & 1 \\
 0 & 0 & 0 \\
 \end{array}
 \right)\in\mathcal{L}(\mathscr{H}).$$
Then $\mathcal{R}(C)=\mathcal{R}(A), D=A^\dag C=C$, $P=U_A^*U_A=A$ and $CA^*=DP={\rm diag}(1,0,0)\in\mathcal{L}(\mathscr{H})_+$. 
Therefore, $\mathcal{R}(D)\ne \mathcal{R}(DP)$. Let $X=(x_{ij})_{1\le i,j\le 3}$ be Hermitian such that $AX=C$. Then direct computation yields 
$$X=\left(
 \begin{array}{ccc}
 1 & 0 & 0 \\
 0 & 0 & 1 \\
 0 & 1 & x_{33} \\
 \end{array}
 \right),$$
which can never be positive for any $x_{33}\in\mathbb{C}$.
\end{remark}

\begin{remark}Example~\ref{equ:counterexample-1} also indicates the wrong assertion in \cite[Theorem~1.3]{Fang-Yu-Yao}.
\end{remark}

\begin{remark} Based on quite different methods from ours, the solvability, Hermitian solvability and positive solvability of operator equation \eqref{equ:equation of AX=C} were considered recently in \cite[Section~3]{Liang-Deng} for Hilbert space operators. With the restriction of the regularities of the Hilbert space operators $A$ and $CA^*$, the positive solvability of
equation \eqref{equ:equation of AX=C} was considered in \cite[Theorem~5.2]{Dajic-Koliha}. The real positive solvability of
equation \eqref{equ:equation of AX=C} was studied in \cite[Section~4]{Liang-Deng} for Hilbert space operators. The latter topic can also be dealt with by following the line in the proof of Lemma~\ref{lem:technique result of positive solution of AX=C}.
\end{remark}

\section{Solvability of $AX=C$ associated with projections}

In this section, we study the solvability of the following operator equation
\begin{equation}\label{equ:operator equation-no solution}(P+Q)^{1/2}X=P, \ X\in \mathcal{L}(\mathscr{H}),\end{equation}
where $P,Q\in\mathcal{L}(\mathscr{H})$ are projections.

\begin{theorem} There exist a $C^*$-algebra $\mathfrak{A}$, a Hilbert $C^*$-module $\mathscr{H}$ over $\mathfrak{A}$ and two projections $P$ and $Q$ in $\mathcal{L}(\mathscr{H})$ such that operator equation \eqref{equ:operator equation-no solution} has no solution.
\end{theorem}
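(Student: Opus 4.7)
The plan is to exhibit an explicit two-projection counterexample in the simplest non-trivial commutative Hilbert $C^{*}$-module, where a continuous family of rank-one projections collides with $P$ at a single point. Take $\mathfrak{A}=C[-1,1]$ and $\mathscr{H}=\mathfrak{A}\oplus\mathfrak{A}$, so that $\mathcal{L}(\mathscr{H})$ is canonically identified with the $C^{*}$-algebra of continuous $M_{2}(\mathbb{C})$-valued functions on $[-1,1]$, and projections in $\mathcal{L}(\mathscr{H})$ are exactly the continuous projection-valued fields. Set
\[
P(t)=\begin{pmatrix}1&0\\0&0\end{pmatrix},\qquad Q(t)=\begin{pmatrix}\cos^{2}t&\cos t\sin t\\ \cos t\sin t&\sin^{2}t\end{pmatrix},
\]
so that $Q$ is the rank-one projection onto the unit vector $(\cos t,\sin t)^{T}$; both $P$ and $Q$ belong to $\mathcal{L}(\mathscr{H})$.

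A direct computation yields $\det\bigl((P+Q)(t)\bigr)=\sin^{2}t$, so $(P+Q)(t)$ is invertible in $M_{2}(\mathbb{C})$ for $t\neq 0$, while $(P+Q)(0)=2P(0)$ has rank $1$. Diagonalizing produces eigenvalues $\lambda_{\pm}(t)=1\pm\cos t$ with orthonormal eigenvectors $v_{+}(t)=(\cos(t/2),\sin(t/2))^{T}$ and $v_{-}(t)=(-\sin(t/2),\cos(t/2))^{T}$. Since $(P+Q)^{1/2}(t)$ is invertible for every $t\neq 0$, any $X\in\mathcal{L}(\mathscr{H})$ satisfying $(P+Q)^{1/2}X=P$ is forced on $[-1,1]\setminus\{0\}$ to coincide with $(P+Q)^{-1/2}(t)P(t)$. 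Using $\sqrt{1-\cos t}=\sqrt{2}\,|\sin(t/2)|$ together with the spectral expansion
\[
(P+Q)^{-1/2}(t)=\lambda_{+}(t)^{-1/2}v_{+}(t)v_{+}(t)^{*}+\lambda_{-}(t)^{-1/2}v_{-}(t)v_{-}(t)^{*},
\]
the absolute value produces a sign depending on $\mathrm{sgn}(t)$, and after the bookkeeping the $(2,1)$-entry of the forced $X(t)$ has limit $-1/\sqrt{2}$ as $t\to 0^{+}$ and $+1/\sqrt{2}$ as $t\to 0^{-}$.

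Because no value of $X(0)$ can simultaneously match both one-sided limits, the forced $X$ admits no continuous extension to $t=0$, and hence no $X\in\mathcal{L}(\mathscr{H})$ solves the equation. I expect the principal technical point to be the careful handling of the sign flip in the spectral formula: one must track how $\sqrt{1-\cos t}=\sqrt{2}\,|\sin(t/2)|$ combines with $v_{-}(t)$ (whose second component does not change sign through $t=0$, whereas the first does) to manufacture the jump. The smooth parametrization $t\mapsto(\cos t,\sin t)$ passing transversally through $(1,0)$ is essential; a tangential or one-sided touching such as $t\mapsto(\cos t^{2},\sin t^{2})$ would yield an even, continuously extendable $X(t)$, in which case the equation would actually be solvable.
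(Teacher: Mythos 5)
Your construction is correct, and your computations check out: with $Q(t)$ the projection onto $(\cos t,\sin t)^{T}$ one indeed gets eigenvalues $1\pm\cos t$ with eigenvectors $v_{\pm}$ as you state, the forced solution on $t\neq 0$ is $X(t)=(P+Q)^{-1/2}(t)P(t)$, and its $(2,1)$-entry equals $\tfrac{1}{\sqrt2}\bigl(\sin(t/2)-\mathrm{sgn}(\sin(t/2))\cos(t/2)\bigr)$, which has one-sided limits $\mp 1/\sqrt2$ at $t=0$; so no continuous $X$ exists. The paper's proof is based on the same mechanism (a rank-one projection rotating through $P$, with $(P+Q)^{1/2}$ invertible away from the collision point so that the solution is forced, and a failure of continuity at the collision point), but it realizes the example differently: it takes $\mathfrak{A}$ to be the universal $C^{*}$-algebra generated by two projections, namely the functions in $C([0,1];M_2(\mathbb{C}))$ that are diagonal at both endpoints, with $\mathscr{H}=\mathfrak{A}$ as a module over itself, and derives the contradiction from the \emph{boundary condition}: membership in $\mathfrak{A}$ forces $x_{21}(0)=0$, while the one-sided limit of the forced solution is $-1/\sqrt2$. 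Your version replaces the endpoint constraint by a transversal two-sided crossing in the plain algebra $M_2(C[-1,1])\cong\mathcal{L}(\mathfrak{A}\oplus\mathfrak{A})$, which makes the example more elementary and self-contained (no appeal to the structure of the two-projection algebra, and the discontinuity is an honest jump rather than a boundary mismatch). What the paper's choice buys is reuse: the same universal algebra $\mathfrak{A}$ and the same formula for the forced solution are exploited, via the universal property, in the subsequent perturbation theorem showing that $Q$ can always be moved by less than $\varepsilon$ to make the equation solvable; your example would not plug directly into that argument. Your closing remark that a tangential touching such as $t\mapsto(\cos t^{2},\sin t^{2})$ would yield a continuously extendable $X$ is also correct and is a good sanity check on where the obstruction really lives.
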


\begin{proof} Let $M_2(\mathbb{C})$ be the set of $2\times 2$ complex matrices, and $\mathfrak{B}=C\big([0,1];M_2(\mathbb{C})\big)$ be the set of continuous matrix-valued functions from $[0,1]$ to $M_2(\mathbb{C})$. Put
\begin{equation}\label{A}
\mathfrak{A}=\{f\in \mathfrak{B}:f(0)\ \mbox{and}\ f(1)\mbox{\ are both\ diagonal}\},
\end{equation}
and $\mathscr{H}=\mathfrak{A}$. With the inner product given by
$$\langle x,y\rangle=x^*y \ \mbox{for any $x,y\in \mathscr{H}$},$$ $\mathscr{H}$ becomes a Hilbert $\mathfrak{A}$-module such that $\mathcal{L}(\mathscr{H})=\mathfrak{A}$.

For shortness' sake, set
$$c_t=\cos\frac{\pi}{2}t\ \mbox{and}\ s_t=\sin\frac{\pi}{2}t,\ \mbox{for each $t\in[0,1]$}.$$ The matrix-valued functions $P(t)=\left(\begin{matrix}1&0\\0&0\end{matrix}\right)$, $Q(t)=\left(\begin{matrix}c_t^2&s_tc_t\\s_tc_t&s_t^2\end{matrix}\right)$ determine projections $P_{\mathfrak{A}}$ and $Q_{\mathfrak{A}}$, respectively, in $\mathfrak{A}$.

Note that $P(t)+Q(t)$ is invertible for all $t\in (0,1]$ (and not invertible for $t=0$). Indeed, $\left|\begin{matrix}1+c_t^2&s_tc_t\\s_tc_t&s_t^2\end{matrix}\right|=(1+c_t^2)s_t^2-s_t^2c_t^2=s_t^2$. Standard calculation shows that
 $$
(P(t)+Q(t))^{1/2}=\left(\begin{matrix}\alpha(t)&\beta(t)\\\beta(t)&\gamma(t)\end{matrix}\right),
 $$
where
\begin{eqnarray*}
\alpha(t)&=&\frac{1}{2}(2-s_t)(\sqrt{1+c_t}+\sqrt{1-c_t}),\\
\beta(t)&=&\frac{1}{2}s_t(\sqrt{1+c_t}-\sqrt{1-c_t}),\\
\gamma(t)&=&\frac{1}{2}s_t(\sqrt{1+c_t}+\sqrt{1-c_t}),
\end{eqnarray*}
hence
$$
(P(t)+Q(t))^{-1/2}=\frac{1}{s_t}\left(\begin{matrix}\gamma(t)&-\beta(t)\\-\beta(t)&\alpha(t)\end{matrix}\right), \ \mbox{for all $t\in (0, 1]$}.
$$

Suppose on the contrary that $X\in \mathfrak{A}$ is a solution of $P_{\mathfrak{A}}=(P_{\mathfrak{A}}+Q_{\mathfrak{A}})^{1/2}X$. Write
$$X=X(t)=\left(\begin{matrix}x_{11}(t)&x_{12}(t)\\x_{21}(t)&x_{22}(t)\end{matrix}\right),$$ where $x_{ij}\in C[0,1]$, $i,j=1,2$ with $x_{12}(0)=x_{21}(0)=x_{12}(1)=x_{21}(1)=0$. Then

\begin{equation}\label{formula_for_X}
X(t)=(P(t)+Q(t))^{-1/2}P(t)=\frac12\left(\begin{matrix}\sqrt{1+c_t}+\sqrt{1-c_t}&0\\-\sqrt{1+c_t}+\sqrt{1-c_t}&0\end{matrix}\right),\quad {\rm ~for~all~} t\in (0,1].
\end{equation}
It follows that
$$0=X_{21}(0)=\lim_{t\to 0}x_{21}(t)=\lim_{t\to 0} \frac12\big(-\sqrt{1+c_t}+\sqrt{1-c_t}\big)=-\frac{1}{\sqrt{2}},$$
which is a contradiction.
\end{proof}

\begin{theorem} Let $\mathscr{H}$ be any Hilbert $C^*$-module and $P,Q\in\mathcal{L}(\mathscr{H})$ be two projections. Then for any $\varepsilon\in (0,1)$, there exists a projection $Q'\in\mathcal{L}(\mathscr{H})$ such that $\|Q-Q'\|<\varepsilon$ and the equation $(P+Q')^{1/2}X=P,\ X\in\mathcal{L}(\mathscr{H})$ has a solution.

\end{theorem}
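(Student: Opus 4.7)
My plan is to perturb $Q$ using a spectral cut-off on $A=P+Q$. The obstruction to solving the unperturbed equation $(P+Q)^{1/2}X=P$ is that $\mathcal{R}(P)$ may fail to lie in $\mathcal{R}((P+Q)^{1/2})$, caused by $0\in\sigma(A)$ being non-isolated. The idea is to modify $Q$ only in the ``bad'' directions where $A$ is small, which by $0\le P,Q\le A$ are also directions on which both $P$ and $Q$ are small.

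Given $\varepsilon\in(0,1)$, I would choose a small $\delta>0$ (to be fixed in terms of $\varepsilon$) and let $g_\delta:[0,2]\to[0,1]$ be continuous, vanishing on $[0,\delta]$ and equal to $1$ on $[2\delta,2]$, with linear interpolation. Set $E=g_\delta(A)\in\mathcal{L}(\mathscr{H})$, a positive contraction commuting with $A$. One checks that $(1-g_\delta(s))^2 s\le\delta$ on $[0,2]$, so $\|(I-E)A(I-E)\|\le\delta$, and consequently $\|Q(I-E)\|^2=\|(I-E)Q(I-E)\|\le\delta$ by $0\le Q\le A$; similarly $\|P(I-E)\|\le\sqrt\delta$.

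Next, form the self-adjoint element $\widetilde Q=EQE+(I-E)Q(I-E)$, the ``block-diagonal compression'' of $Q$ with respect to $E$ and $I-E$. A direct computation gives $\widetilde Q-Q=-EQ(I-E)-(I-E)QE$, so $\|\widetilde Q-Q\|\le 2\sqrt\delta$. Using $Q^2=Q$ and the identity $\widetilde Q^{\,2}-\widetilde Q=\widetilde Q(\widetilde Q-Q)+(\widetilde Q-Q)(Q-I)$ one obtains $\|\widetilde Q^{\,2}-\widetilde Q\|\le C\sqrt\delta$ for a universal constant $C$. For $\delta$ sufficiently small this lies below $1/4$, so the standard $C^*$-algebra perturbation lemma applies: $\sigma(\widetilde Q)\subseteq[-\alpha,\alpha]\cup[1-\alpha,1+\alpha]$ for some $\alpha$ comparable to $\|\widetilde Q^{\,2}-\widetilde Q\|$, and the spectral projection $Q':=\chi_{[1/2,\infty)}(\widetilde Q)$ lies in $C^*(\widetilde Q,I)\subseteq\mathcal{L}(\mathscr{H})$ with $\|Q'-\widetilde Q\|\le\alpha$. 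Hence $\|Q'-Q\|<\varepsilon$ once $\delta$ is chosen small.

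Finally, I would verify that $(P+Q')^{1/2}X=P$ has a solution. The key estimate is $P+\widetilde Q\ge EAE\ge\delta E^2$ via the functional calculus on $A$, so $P+\widetilde Q$ is bounded below on $E\mathscr{H}$ by a positive multiple of $E$, and this property survives the $O(\sqrt\delta)$-perturbation from $\widetilde Q$ to $Q'$. On $(I-E)\mathscr{H}$ both $P$ and $Q'$ are small of order $\sqrt\delta$, so the ``bad direction'' of $P+Q'$ is contained in $\mathcal{N}(P)$ up to this error, and $\mathcal{R}(P)$ is effectively contained in the invertible block; a Douglas-type argument together with the semi-regularity arising from the spectral gap of $P+Q'$ then yields a solution via Theorem~2.2. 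The main obstacle is precisely this last verification---tracking the effect of the nonlinear passage $\widetilde Q\to Q'$ on the spectral structure of $P+Q'$ and confirming that both the range inclusion $\mathcal{R}(P)\subseteq\mathcal{R}((P+Q')^{1/2})$ and the semi-regularity of $(P+Q')^{1/2}$ persist, which requires careful bookkeeping of the $\delta$-dependent constants.
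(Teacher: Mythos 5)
You have correctly identified where the difficulty lies, and your reduction is sound up to that point: if you could produce a projection $Q'$ with $\|Q-Q'\|<\varepsilon$ such that $P+Q'$ has closed range, then $(P+Q')^{1/2}$ would be regular (hence semi-regular), the decomposition $\mathscr{H}=\mathcal{N}(P+Q')\oplus\mathcal{R}(P+Q')$ together with $\mathcal{N}(P+Q')=\mathcal{N}(P)\cap\mathcal{N}(Q')$ would give $\mathcal{R}(P)\subseteq\mathcal{R}\big((P+Q')^{1/2}\big)$, and Theorem~2.4 would finish the argument. The quantitative steps (the cut-off $E=g_\delta(A)$, the bound $\|Q(I-E)\|\le\sqrt\delta$, the compression $\widetilde Q$, and the almost-projection lemma) are essentially correct, apart from the unjustified inequality $P+\widetilde Q\ge EAE$ (note that $P\ge EPE$ is false in general). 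The fatal problem is that the construction does not produce the spectral gap, and the equation for your $Q'$ can genuinely remain unsolvable.

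This is already visible in the paper's own counterexample $\mathfrak{A}=\{f\in C([0,1],M_2(\mathbb{C})):f(0),f(1)\ \text{diagonal}\}$ with $P(t)=\mathrm{diag}(1,0)$ and $Q(t)$ the rank-one projection at angle $\tfrac{\pi}{2}t$. Here $A(t)$ has eigenvalues $1\pm c_t$ with spectral projections $P_\pm(t)$, and for $t$ so small that $1-c_t\le\delta$ one finds $E(t)=P_+(t)$, hence $\widetilde Q(t)=\tfrac{1+c_t}{2}P_+(t)+\tfrac{1-c_t}{2}P_-(t)$ and therefore $Q'(t)=\chi_{[1/2,\infty)}(\widetilde Q(t))=P_+(t)$. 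But $P_+(t)$ is again a rank-one projection at a \emph{nonzero} angle with $P(t)$ --- in fact exactly the angle $\tfrac{\pi}{4}t$, since $\langle (1,0),e_+(t)\rangle^2=\tfrac{1+c_t}{2}$. Consequently $P+Q'$ is invertible for all small $t>0$ with lower bound $1-\cos\tfrac{\pi}{4}t\to0$, so $0$ is still a non-isolated point of $\sigma(P+Q')$ and the range is not closed; and for such $t$ the solution $X(t)=(P(t)+Q'(t))^{-1/2}P(t)$ is unique, with $(2,1)$ entry tending to $-\tfrac{1}{\sqrt2}$ by the same computation as in \eqref{formula_for_X}. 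That limit is independent of how fast the angle closes, so the boundary condition $x_{21}(0)=0$ is violated and the equation for your $Q'$ has no solution in $\mathfrak{A}$. The moral is that an $O(\sqrt\delta)$ modification in the bad directions cannot remove an obstruction of fixed size $\tfrac{1}{\sqrt2}$: one needs $Q'$ to coincide with $P$ (so that $P+Q'$ degenerates and the solution becomes non-unique) on an entire neighbourhood of the bad point. This is exactly what the paper's proof arranges: it realizes $P,Q$ as $\psi(P_{\mathfrak{A}}),\psi(Q_{\mathfrak{A}})$ for a $*$-homomorphism $\psi$ out of the universal $C^*$-algebra generated by two projections (Raeburn--Sinclair) and takes $Q'=\psi(\mu(Q_{\mathfrak{A}}))$, where $\mu$ collapses $[0,\varepsilon]$ to the point $0$, forcing $Q'_{\mathfrak{A}}(t)=P_{\mathfrak{A}}(t)$ identically on $[0,\varepsilon]$ and allowing the off-diagonal entry of the solution to be interpolated from $0$ to $-\tfrac{1}{\sqrt2}$.
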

\begin{proof}
It is known that the $C^*$-algebra $\mathfrak{A}$ defined by \eqref{A} is the universal unital $C^*$-algebra generated by two projections \cite{Raeburn-Sinclair}. By the universality of $\mathfrak{A}$, given two projections $P$ and $Q$ in $\mathcal{L}(\mathscr{H})$, we get a $*$-homomorphism $\psi:\mathfrak{A}\to\mathcal{L}(\mathscr{H})$ such that $\psi(P_{\mathfrak{A}})=P$ and $\psi(Q_{\mathfrak{A}})=Q$.

Let $h:[0,1]\to[\varepsilon,1]$ be a linear homeomorphism, and let $\mu:\mathfrak{A}\to \mathfrak{A}$ be a $*$-homomorphism defined by
$$
\mu(f)(t)=\left\lbrace\begin{array}{cl}f(0)&\mbox{if\ }t\in[0,\varepsilon];\\f(h^{-1}t)&\mbox{if\ }t\in[\varepsilon,1],\end{array}\right.\qquad f\in C\big([0,1],M_2(\mathbb{C})\big).
$$
Set $Q'_A=\mu(Q_{\mathfrak{A}})$. Then $Q'_A$ is a projection, and $\lim_{\varepsilon\to 0}\|Q_{\mathfrak{A}}-Q'_A\|=0$. Set $Q'=\psi(Q'_A)$, then $\|Q-Q'\|\leq\|Q_{\mathfrak{A}}-Q'_A\|$.

For $t\in(\varepsilon,1]$, the equation $P_{\mathfrak{A}}(t)=(P_{\mathfrak{A}}(t)+Q'_A(t))^{1/2}X(t)$ has a unique solution $X_A(t)$ as formulated by \eqref{formula_for_X}, with $\lim_{t\to\varepsilon}x_{21}(t)=-\frac{1}{\sqrt{2}}$. Note that for $t\in[0,\varepsilon]$ we have $Q'_A(t)=P_{\mathfrak{A}}(t)=\left(\begin{smallmatrix}1&0\\0&0\end{smallmatrix}\right)$, hence for $t\in[0,\varepsilon]$ we can take
$$
X_A(t)=\left(\begin{matrix}\frac{1}{\sqrt{2}}&0\\-\frac{t}{\varepsilon\sqrt{2}}&0\end{matrix}\right)
$$
as a solution for $P_{\mathfrak{A}}(t)=(P_{\mathfrak{A}}(t)+Q'_A(t))^{1/2}X(t)$, and, as $x_{21}$ is continuous and $x_{21}(0)=0$, we have $X_A\in \mathfrak{A}$. Then $X=\psi(X_A)\in\mathcal{L}(\mathscr{H})$ is a solution for $P=(P+Q')^{1/2}X$.
\end{proof}

\vspace{2ex}
\noindent\textbf{Acknowledgments}

The authors would like to thank Dr. Haiyan Zhang for the counterexample given in Remark~\ref{rem:Zhangs observation}.

\vspace{2ex}

\bibliographystyle{amsplain}

\end{document}